\theoremstyle{plain}
\newtheorem{theorem}{Theorem}[section]
\newtheorem{proposition}{Proposition}[section]
\newtheorem{corollary}{Corollary}[section]
\newtheorem{lemma}{Lemma}[section]
\newtheorem{remark}{\bf Remark}[section]
\theoremstyle{definition}
\newcommand{\rot}{\mathop\mathrm{rot}}
\newcommand{\grad}{\mathop\mathrm{grad}}
\renewcommand{\div}{\mathop\mathrm{div}}
\title
[Berezin--Li--Yau inequalities on domains on the sphere]
{Berezin--Li--Yau inequalities on domains on the sphere}
\author[A.Ilyin and A.Laptev] {Alexei  Ilyin and Ari Laptev}
\begin{document}
 \begin{abstract}
We prove Berezin--Li--Yau inequalities for the
Dirichlet and Neumann eigenvalues on domains on the sphere
$\mathbb{S}^{d-1}$.  The case of $\mathbb{S}^{2}$ is treated
in greater detail, including the vector Dirichlet Laplacian and
the Stokes operator.

\end{abstract}

\subjclass[2010]{35P15, 26D10} \keywords{Berezin--Li--Yau
inequalities, Riesz means, Estimation of eigenvalues, Spherical
harmonics}

\address
{\noindent\newline  Keldysh Institute of Applied Mathematics;
\newline
Imperial College London and Institute Mittag--Leffler;}
\email{ ilyin@keldysh.ru;  a.laptev@imperial.ac.uk}

\maketitle

\setcounter{equation}{0}
\section{Introduction}
\label{sec0}

Let $\Omega$ be a bounded domain in $\mathbb{R}^d$ with volume $|\Omega|$. We denote by
$\{\lambda_k\}_{k=1}^\infty$ and $\{\mu_k\}_{k=1}^\infty$ the eigenvalues
of  the  Laplacian $-\Delta$  with Dirichlet and Neumann boundary conditions, respectively.
In the case of the Neumann boundary conditions we additionally assume that
boundary $\partial \Omega$ is sufficiently regular so that the embedding
$H^1(\Omega)\hookrightarrow L_2(\Omega)$ is compact.

The following bounds hold  for the Riesz means of  $\lambda_k$ and $\mu_k$
\begin{equation}\label{BL}
\aligned
\sum(\lambda-\lambda_k)_+^\sigma\le\mathrm{L}^\mathrm{cl}_{\sigma,d}|\Omega|\lambda^{\sigma+d/2},\\
\sum(\lambda-\mu_k)_+^\sigma\ge\mathrm{L}^\mathrm{cl}_{\sigma,d}|\Omega|\lambda^{\sigma+d/2},
\endaligned
\end{equation}
where $x_+=\max(0,x)$, $\sigma\ge1$,  and
\begin{equation}\label{class}
\mathrm{L}^\mathrm{cl}_{\sigma,d}=
\frac1{(2\pi)^d}\int_{\mathbb{R}^d}(1-|\xi|^2)^\sigma_+d\xi=
\frac{\Gamma(\sigma+1)}{(4\pi)^{2/d}\Gamma(\sigma+d/2+1)}.
\end{equation}
The first one follows from more general results of \cite{Berezin}.
A direct proof of both is given in \cite{LapFA}.

A lower bound for the sums of the Dirichlet eigenvalues was
obtained in~\cite{Li-Yau}
\begin{equation}\label{LiYau}
\sum_{k=1}^n\lambda_k\ge\frac d{2+d}\left(\frac{(2\pi)^d}{\omega_d|\Omega|}\right)^{2/d} n^{1+2/d},
\end{equation}
and a similar upper bound for the Neumann eigenvalues was proved in~\cite{Kroger}
\begin{equation}\label{Kroger}
\sum_{k=1}^n\mu_k\le\frac d{2+d}\left(\frac{(2\pi)^d}{\omega_d|\Omega|}\right)^{2/d} n^{1+2/d}.
\end{equation}
We  point out that the fact that the coefficients are constant
is essential in the proof.
Also, the constants in~\eqref{BL} and \eqref{LiYau},\eqref{Kroger} are
sharp in view of the Weyl asymptotic formula for the eigenvalues.
Finally, it was observed in~\cite{L-W} that inequalities \eqref{BL}
with $\sigma=1$   and
\eqref{LiYau}, \eqref{Kroger} are pairwise equivalent,
and the equivalence is realized by the Legendre transform and
explains why they are called Berezin--Li--Yau inequalities.

Note that for domains $\Omega\subset\mathbb{R}^d$ there are many results where the authors obtained Berezin--Li--Yau inequalities with remainder terms. Among them are  \cite{G-L-W}, \cite{W} (with $\sigma\ge3/2$ )  for the Berezin inequality and
\cite{F-L-U}, \cite{M}, \cite{K-V-W}, \cite{Y}, \cite{YY} for the Li-Yau estimate.
Moreover, improved such inequalities for magnetic operators were recently proved in \cite{KW}.

Our interest in this paper is in proving Berezin--Li--Yau inequalities
for the Dirichlet and Neumann eigenvalues of the Laplace--Beltrami operator  on a domain $\Omega$ on
the unit sphere $\mathbb{S}^{d-1}$. Now
 $\Omega$ is a (curved) domain on $\mathbb{S}^{d-1}$ with $(d-1)$-dimensional
surface area  $|\Omega|\le |\mathbb{S}^{d-1}|=:\sigma_d$. As before we denote by
$\{\lambda_k\}_{k=1}^\infty$ and $\{\mu_k\}_{k=1}^\infty$ the eigenvalues
of  the  Laplace--Beltrami operator $-\Delta$
with Dirichlet and Neumann boundary conditions, respectively.

The main general results of Section~\ref{sec1} are the following bounds for the Riesz means
of order $\sigma\ge1$
of the eigenvalues $\{\lambda_k\}_{k=1}^\infty$ and $\{\mu_k\}_{k=1}^\infty$:
\begin{equation}\label{Riesz}
\aligned
\sum_{j=1}^\infty(\lambda-\lambda_j)_+^\sigma
\le\frac{|\Omega|}{\sigma_d}\sum_{n=0}^\infty(\lambda-\Lambda_n)_+^\sigma k_d(n),\\
\sum_{j=1}^\infty(\lambda-\mu_j)_+^\sigma
\ge\frac{|\Omega|}{\sigma_d}\sum_{n=0}^\infty(\lambda-\Lambda_n)_+^\sigma k_d(n).
\endaligned
\end{equation}
Here $\Lambda_n$ and $k_d(n)$ are the eigenvalues and their multiplicities
of the Laplace operator on the whole sphere $\mathbb{S}^{d-1}$, see
\eqref{harmonics}--\eqref{mult}.

Next, we set $\sigma=1$  so that the right-hand side, denoted below
by $F_{\mathbb{S}^{d-1}}(\lambda)$,
is a continuous piecewise linear function with change of slope at $\lambda=\Lambda_N$, $N=0,1\dots$,
and by means of the explicit expression for $F_{\mathbb{S}^{d-1}}(\Lambda_N)$
proved in Section~\ref{secApp} we show in Section~\ref{sec4} that
in all dimensions
$$
F_{\mathbb{S}^{d-1}}(\lambda):=\frac{|\Omega|}{\sigma_d}\sum_{n=0}^\infty(\lambda-\Lambda_n)_+ k_d(n)\ge
\mathrm{L}^\mathrm{cl}_{1,d-1}|\Omega|\lambda^{1+(d-1)/2},
$$
where the inequality is strict for all $\lambda>0$ when $d-1\ge3$, while in 2D case
we have equality for $\lambda=\Lambda_N$. This gives that
$$
\sum_{j=1}^\infty(\lambda-\mu_j)_+\ge\mathrm{L}^\mathrm{cl}_{1,d-1}|\Omega|\lambda^{1+(d-1)/2}.
$$

Applying the Legendre transform for both sides we obtain a Li--Yau-type upper bound
for the sums of the first $n$  eigenvalues of the
Neumann Laplacian on $\Omega\subseteq\mathbb{S}^{d-1}$
$$
\sum_{k=1}^n\mu_k\le
\frac {d-1}{2+d-1}\left(\frac{(2\pi)^{d-1}}{\omega_{d-1}|\Omega|}\right)^{2/(d-1)}\,n^{1+2/(d-1)},
$$
which looks exactly the same as in the case of the Euclidean space.

The situation with the Dirichlet eigenvalues is different and a Li--Yau estimate in  the form
\eqref{LiYau} cannot hold, since the first eigenvalue on the whole
sphere is $0$. Therefore we restrict ourselves to the case of $\mathbb{S}^2$,
where we take advantage of the fact that
$F_{\mathbb{S}^{2}}(\Lambda_N)=\mathrm{L}^\mathrm{cl}_{1,2}|\Omega|\Lambda_N^{2}$,
and by evaluating the Legendre transform of $F_{\mathbb{S}^{2}}(\lambda)$
we  obtain a Li--Yau-type lower bound:
$$
\sum_{k=1}^n\lambda_k\ge\frac{2\pi}{|\Omega|}\,n\left(n-\frac{|\Omega|}{4\pi}\right),
$$
which turns into the equality when $\Omega=\mathbb{S}^2$ and
$n=N^2$, and which properly takes into account the behaviour of the
first eigenvalue as $|\Omega|\to |\mathbb{S}^2|=4\pi$:
\begin{equation}\label{lambda1intro}
\lambda_1=\lambda_1(\Omega)\ge\frac{2\pi}{|\Omega|}\,\left(1-\frac{|\Omega|}{4\pi}\right).
\end{equation}

In this connection we observe that sharp estimates for the first eigenvalue of
the Schr\"odinder operator on $\mathbb{S}^{d-1}$ without the exclusion of the zero mode
were obtained in \cite{D-E-L}.

The proof of the inequalities for Riesz means~\eqref{Riesz}, on which the subsequent analysis
is based, essentially relies on the pointwise  identity~\eqref{identity}
for the orthonormal spherical harmonics.

A similar pointwise identity holds for the gradients
of the spherical harmonics~\eqref{identity-vec} and makes it
possible to prove Berezin--Li--Yau inequalities  for the
Dirichlet eigenvalues of the vector Laplacian. Unlike the scalar case,
the vector Laplacian (we deal with the Laplace--de Rham operator)  is
strictly positive, since the sphere is simply connected. We consider
only the two dimensional case, where a divergence free
vector function have a scalar stream function, so that the
identity~\eqref{identity-vec} works equally well in the invariant
spaces of potential and divergence free vector functions. We obtain
a Li--Yau-type inequality for the vector Dirichlet Laplacian, which
looks exactly the same as in the case $\Omega\subset\mathbb{R}^2$. A more interesting
example is the Stokes operator on a domain $\Omega\subseteq\mathbb{S}^2$:
\begin{equation}\label{Stokessmooth}
\aligned
&-\mathbf{\Delta} u_j \,+\, \nabla p_j\,=\,\nu_ju_j,\\
&\div u_j\,=\,0,\,\,\,u_j\vert_{\partial\Omega }\,=\,0,
\endaligned
\end{equation}
where the scalars $p_j$ are the corresponding pressures. We show that
$$
\sum_{k=1}^n\nu_k\ge\frac{2\pi}{|\Omega|}\,n^2.
$$
We observe that this estimate is also  exactly the same as the
Li--Yau bound for the Stokes operator~\cite{Il_Stokes}, \cite{Il_Stokes_Melas}
 in the 2d case  $\Omega\subset\mathbb{R}^2$.

In conclusion we  recall the basic facts concerning the Laplace
operator on the sphere $\mathbb{S}^{d-1}$~(see, for instance, \cite{Mikhlin,S-W}).
The one dimensional case $d=2$ is somewhat special, so we assume below that
$d\ge3$. We have
for the (scalar) Laplace--Beltrami operator
$\Delta=\div\nabla$:
\begin{equation}\label{harmonics}
-\Delta Y_n^k=\Lambda_n Y_n^k,\quad
k=1,\dots,k_d(n),\quad n=0,1,2,\dots.
\end{equation}
Here the $Y_n^k$ are the orthonormal spherical harmonics which
are chosen to be real-valued.
Each eigenvalue $\Lambda_n=n(n+d-2)$
has multiplicity
\begin{equation}\label{mult}
k_d(n)=\frac1{d-2}\binom{n+d-3}{d-3}(2n+d-2).
\end{equation}
For example, for $d=3$ we have for $\mathbb{S}^2$
$\Lambda_n=n(n+1)$, $k_3(n)=2n+1$.

The following identity is essential in what follows~\cite{Mikhlin,S-W}:
for any $s\in\mathbb{S}^{d-1}$
\begin{equation}\label{identity}
\sum_{k=1}^{k_d(n)}Y_n^k(s)^2=\frac{k_d(n)}{\sigma_d},
\end{equation}
where $|S^{d-1}|=:\sigma_d=2\pi^{d/2}/\Gamma(d/2)$ is the surface area of $S^{d-1}$.
This identity, in turn, follows from the addition theorem
for the spherical harmonics
\begin{equation}\label{Gegen}
\sum_{k=1}^{k_d(n)}Y_n^k(s)Y_n^k(s_0)=\frac{2n+d-2}{(d-2)\sigma_d}
P_n^\lambda(s\cdot s_0),\ s\cdot s_0=\cos\gamma,
\end{equation}
where $\gamma$ is the angle between $s$ and $s_0$, $\lambda=(d-2)/2$,
and~$P_n^\lambda(t)$ are the Gegenbauer polynomials associated with~$\lambda$
which can be defined in terms of a generating function
$$
(1-2rt+r^2)^{-\lambda}=\sum_{n=1}^\infty P_n^\lambda(t)r^n,\quad|t|\le 1,\
|r|<1.
$$

For the two-dimensional sphere the Gegenbauer polynomials $P_n^{1/2}(t)$ are the classical
Legendre polynomials $P_n(t)$, and \eqref{Gegen} goes over to
the Laplace addition theorem for the spherical harmonics on $\mathbb{S}^2$:
\begin{equation}\label{Lapl-add}
\sum_{k=1}^{2n+1}Y_n^k(s)Y_n^k(s_0)=\frac{2n+1}{4\pi}\,
P_n(s\cdot s_0).
\end{equation}

In the
vector case we have the identity for the gradients
of spherical harmonics that is similar to~\eqref{identity} (see \cite{I93}): for any $s\in\mathbb{S}^{d-1}$
\begin{equation}\label{identity-vec}
\sum_{k=1}^{k_d(n)}|\nabla Y_n^k(s)|^2=\Lambda_n\frac{k_d(n)}{\sigma_d}.
\end{equation}

This identity is especially useful for inequalities for
vector functions on $\mathbb{S}^2$ and we prove it for the sake
of completeness. Substituting $\varphi(s)=Y_n^k(s)$ into
the identity
$$
\Delta\varphi^2=2\varphi\Delta\varphi+2|\nabla\varphi|^2
$$
we sum the results over $k=1,\dots,k_d(n)$. In view of \eqref{identity}
the left-hand side vanishes and we obtain~\eqref{identity-vec}
since the $Y_n^k(s)$'s are the eigenfunctions corresponding to $\Lambda_n$.

\setcounter{equation}{0}
\section{Inequalities for Riesz means of the  Dirichlet and Neumann
eigenvalues on domains on the sphere}\label{sec1}

Let $\Omega\subseteq\mathbb{S}^{d-1}$ be a (curved) domain on $\mathbb{S}^{d-1}$
with $(d-1)$-dimensional surface measure  $|\Omega|\le\sigma_d$.
We denote by $H^1(\Omega)$ the standard Sobolev space on $\Omega$, and by
$H^1_0(\Omega)$ its closed subspace of functions vanishing on $\partial \Omega$.
Next, we define the Dirichlet Laplacian $\Delta^D_\Omega$ and the  Neumann Laplacian
$\Delta^N_\Omega$
via the quadratic forms with domains $H^1_0(\Omega)$ and $H^1(\Omega)$,
respectively.

We assume (in the case of the Neumann Laplacian)  that
the boundary $\partial\Omega$ is sufficiently regular so that
the embedding $H^1(\Omega)\hookrightarrow L_2(\Omega)$ is compact.

To fix notation we write the Dirichlet and Neumann eigenvalue problems
\begin{equation}\label{DN}
\aligned
-\Delta \psi_j=\lambda_j\psi_j,\quad\psi_j\vert_{\partial\Omega}=0,\qquad
0\le\lambda_1\le\lambda_2\le\dots \to+\infty,\\
-\Delta \omega_j=\mu_j\omega_j,\quad
\frac{\partial\omega_j}{\partial n}{\vert_{\partial\Omega}}=0,\qquad
0=\mu_1\le\mu_2\le\dots \to+\infty,
\endaligned
\end{equation}
where $n$ is the unit normal to $\partial\Omega$ tangent to $\mathbb{S}^{d-1}$.
Here $\lambda_1>0$ if $\Omega$ is a proper domain on $\mathbb{S}^{d-1}$:
$\mathrm{meas}(\mathbb{S}^{d-1}\setminus\Omega)>0$. If $\Omega=\mathbb{S}^{d-1}$,
then $\lambda_j=\mu_j$ and the eigenvalues  coincide with $\Lambda_n$ with multiplicity \eqref{mult}.
Both systems $\{\psi_j\}_{j=1}^\infty$ and $\{\omega_j\}_{j=1}^\infty$
are orthonormal in $L_2(\Omega)$.

The following result contains estimates for the Riesz means
of the eigenvalues $\lambda_j$ and $\mu_j$.

\begin{theorem}\label{T:BL}
Let $\sigma\ge1$. Then
inequalities \eqref{Riesz} hold true.
\end{theorem}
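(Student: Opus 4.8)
The plan is to prove the two inequalities in~\eqref{Riesz} by the Berezin--Li--Yau variational method, using the ``free'' Laplacian on the whole sphere $\mathbb{S}^{d-1}$ as the comparison operator and the pointwise identity~\eqref{identity} as the crucial ingredient that makes the angular sums collapse to the correct constant. I will treat the Dirichlet case first; the Neumann case is dual.

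\emph{Dirichlet bound.} Let $\{\psi_j\}$ be the $L_2(\Omega)$-orthonormal Dirichlet eigenfunctions, extended by zero to all of $\mathbb{S}^{d-1}$, so that $\psi_j\in H^1(\mathbb{S}^{d-1})$. Expand each $\psi_j$ in the orthonormal basis $\{Y_n^k\}$ of spherical harmonics and set $c_{j,n,k}=\langle\psi_j,Y_n^k\rangle_{L_2(\mathbb{S}^{d-1})}$. Parseval gives $\sum_{n,k}c_{j,n,k}^2=1$ and, since $\psi_j$ is a Dirichlet eigenfunction, $\sum_{n,k}\Lambda_n c_{j,n,k}^2=\|\nabla\psi_j\|^2=\lambda_j$. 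For a fixed level $n$ introduce the ``occupation density''
\begin{equation*}
\rho_n:=\sum_{j\,:\,\lambda_j<\lambda}(\lambda-\lambda_j)^{\sigma-1}\sum_{k=1}^{k_d(n)}c_{j,n,k}^2 .
\end{equation*}
The two key estimates on $\rho_n$ are: (i) a pointwise bound $\rho_n\le\bigl(\text{something}\bigr)$ coming from Bessel's inequality for the vectors $(Y_n^k(s))_k$ together with the identity $\sum_k Y_n^k(s)^2=k_d(n)/\sigma_d$, which after integrating over $\Omega$ yields $\rho_n\le \dfrac{|\Omega|}{\sigma_d}\,k_d(n)\,\Phi(\lambda)$ for the appropriate truncated power $\Phi$; and (ii) the ``total mass'' constraint obtained by summing the Parseval and energy identities against the weight $(\lambda-\lambda_j)^{\sigma-1}$. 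One then writes
\begin{equation*}
\sum_j(\lambda-\lambda_j)_+^{\sigma}
=\sigma\int_0^{\lambda}\!\!\Bigl(\sum_{j\,:\,\lambda_j<t}(\lambda-\lambda_j)^{\sigma-1}\Bigr)\,dt
\quad\text{(layer-cake in the auxiliary variable)},
\end{equation*}
splits $\sum_j(\lambda-\lambda_j)^{\sigma-1}=\sum_n\rho_n$ by Parseval, and estimates each $\rho_n$ by $\min\{\text{mass available},\ \text{capacity }|\Omega|k_d(n)/\sigma_d\}$. Because the capacity of level $n$ is linear in $k_d(n)$ with the \emph{constant} coefficient $|\Omega|/\sigma_d$, the optimal bang-bang choice (fill the lowest levels $\Lambda_n<\lambda$ to capacity) is exactly the right-hand configuration, and carrying this through the $t$-integral produces $\dfrac{|\Omega|}{\sigma_d}\sum_n(\lambda-\Lambda_n)_+^{\sigma}k_d(n)$.

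\emph{Neumann bound.} Here one runs the argument in the opposite direction: use the spherical harmonics $Y_n^k$ themselves as trial functions for the Neumann form on $\Omega$ (restrict them to $\Omega$), apply the variational principle for Riesz means of $-\Delta^N_\Omega$ — i.e.\ $\sum_j(\lambda-\mu_j)_+^{\sigma}\ge\sup\sum_i(\lambda-R[\phi_i])_+^{\sigma}$ over orthonormal-in-$L_2(\Omega)$ families, in the Berezin form $\sum_j(\lambda-\mu_j)_+\ge \operatorname{Tr}(\lambda-(-\Delta^N_\Omega))_+$ and its $\sigma$-analogue — with the family $\{Y_n^k|_\Omega\}$. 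The Gram matrix and the energy are again controlled termwise by~\eqref{identity}: $\sum_k\|Y_n^k\|_{L_2(\Omega)}^2=|\Omega|k_d(n)/\sigma_d$ and $\sum_k\|\nabla Y_n^k\|_{L_2(\Omega)}^2=\Lambda_n|\Omega|k_d(n)/\sigma_d$ by~\eqref{identity-vec}. Coherent-state / spectral-projection computations (as in~\cite{LapFA}, \cite{Kroger}) then give the lower bound $\dfrac{|\Omega|}{\sigma_d}\sum_n(\lambda-\Lambda_n)_+^{\sigma}k_d(n)$.

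\emph{Main obstacle.} The real work is the convexity/rearrangement step: reducing the constrained optimization ``distribute a fixed amount of $L_2$-mass with a fixed total energy among the levels $\Lambda_n$, each level having capacity proportional to $k_d(n)$'' to the extremal bang-bang profile, uniformly in the auxiliary integration variable $t$, and checking that the elementary inequality $(\lambda-a)_+^{\sigma}$ being convex and nonincreasing in $a$ (valid for $\sigma\ge1$) is exactly what legitimizes the exchange. Equivalently, for $\sigma\ge1$ one needs the pointwise inequality, for each fixed $s\in\Omega$,
\begin{equation*}
\sum_j\Bigl(\lambda-\lambda_j\Bigr)_+^{\sigma}\!\!\psi_j(s)^2\ \le\ \frac1{\sigma_d}\sum_n(\lambda-\Lambda_n)_+^{\sigma}k_d(n)
\end{equation*}
after integrating in $s$, and the clean way to get this is to note that the left-hand side is $\bigl\langle (\lambda+\Delta^D_\Omega)_+^{\sigma}\delta_s,\delta_s\bigr\rangle$-type quantity and the right-hand side is the analogous trace on $\mathbb{S}^{d-1}$, with the inequality following from $H^1_0(\Omega)\subset H^1(\mathbb{S}^{d-1})$ (monotonicity of Dirichlet forms under extension by zero) combined with the operator monotonicity of $x\mapsto x_+^{\sigma}$ — this last point needing the standard Berezin argument since $x_+^{\sigma}$ is operator monotone only for $0\le\sigma\le1$, so one instead uses that it is a positive combination of the operator-monotone functions $(x-\tau)_+^0$, $\tau\ge0$, which is precisely the layer-cake representation written above. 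I expect to spend most of the proof making that reduction airtight; everything else is bookkeeping built on~\eqref{identity} and~\eqref{identity-vec}.
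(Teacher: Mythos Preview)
Your Dirichlet argument is essentially the paper's, but you have obscured a two-line computation behind unnecessary machinery. Once you expand the zero-extended $\psi_j$ in spherical harmonics, the identity $\lambda-\lambda_j=\sum_n(\lambda-\Lambda_n)\sum_k c_{j,n,k}^2$ together with the trivial bound $\bigl(\sum_n x_n\bigr)_+\le\sum_n(x_n)_+$ gives
\[
\sum_j(\lambda-\lambda_j)_+\le\sum_n(\lambda-\Lambda_n)_+\sum_j\sum_k c_{j,n,k}^2
=\sum_n(\lambda-\Lambda_n)_+\sum_k\|Y_n^k\|_{L_2(\Omega)}^2
=\frac{|\Omega|}{\sigma_d}\sum_n(\lambda-\Lambda_n)_+k_d(n),
\]
using completeness of $\{\psi_j\}$ in $L_2(\Omega)$ and then~\eqref{identity}. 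No ``bang--bang'' rearrangement, no capacity constraints, no operator-monotonicity discussion. Your ``main obstacle'' is a non-issue: for $\sigma=1$ nothing beyond the above is needed, and for $\sigma>1$ your layer-cake idea is correct but should be the Aizenman--Lieb identity $E_+^\sigma=c_\sigma^{-1}\int_0^\infty(E-t)_+\,t^{\sigma-2}\,dt$, which lifts the $\sigma=1$ inequality directly (your displayed layer-cake formula is not the right one).

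The Neumann case, however, has a genuine gap. You propose to plug the restrictions $Y_n^k|_\Omega$ into the variational principle $\sum_j(\lambda-\mu_j)_+^\sigma\ge\sup\sum_i(\lambda-R[\phi_i])_+^\sigma$ over $L_2(\Omega)$-orthonormal families. But the $Y_n^k|_\Omega$ are \emph{not} orthonormal in $L_2(\Omega)$ for a proper subdomain, and ``controlling the Gram matrix termwise by~\eqref{identity}'' does not repair this: the variational inequality you wrote simply does not accept a non-orthonormal trial family. The paper's route is different and is where the real content lies: one expands the Neumann eigenfunctions $\omega_j$ (which \emph{are} orthonormal in $L_2(\Omega)$) in spherical harmonics, writes $\sum_j(\lambda-\mu_j)_+=\sum_n\bigl(\sum_j\varphi_\lambda(\mu_j)b_{jn}\bigr)$ with $b_{jn}=\sum_k(\omega_j,Y_n^k)^2$, observes that $\sum_j b_{jn}=|\Omega|k_d(n)/\sigma_d$ by Parseval and~\eqref{identity}, and then applies \emph{Jensen's inequality} to the convex function $\varphi_\lambda(t)=(\lambda-t)_+$ against the probability weights $b_{jn}/\sum_j b_{jn}$. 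The remaining computation $\sum_j\mu_j b_{jn}=\Lambda_n\,|\Omega|k_d(n)/\sigma_d$ is where the spectral theorem and the pointwise relation $-\Delta Y_n^k=\Lambda_n Y_n^k$ enter. You should replace your trial-function sketch by this Jensen argument; it is the step that makes the Neumann direction work.
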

\begin{proof} To simplify the notation we shall prove~\eqref{Riesz} for $\mathbb{S}^2$,
the general case is treated in exactly the same way. Furthermore, we start with $\sigma=1$
and first consider the Dirichlet Laplacian.

We extend by zero each eigenfunction $\psi_j$ to the whole $\mathbb{S}^2$ setting
$$
\widetilde\psi_j(s)=
\left\{
  \begin{array}{ll}
    \psi_j(s), & \hbox{$s\in\Omega$;} \\
    0, & \hbox{$s\notin\Omega$.}
  \end{array}
\right.
$$
and expand the result
in spherical harmonics:
$$
\widetilde\psi_j(s)=\sum_{n=0}^\infty\sum_{k=1}^{2n+1}a_{jn}^kY_n^k(s),\quad s\in\mathbb{S}^2,
$$
where
$$
a_{jn}^k=\int_{\mathbb{S}^2}\widetilde\psi_j(s)Y_n^k(s)ds=
\int_{\Omega}\psi_j(s)Y_n^k(s)ds.
$$
Then using \eqref{harmonics} and orthonormality of the $Y_n^k$'s we obtain
$$
\aligned
\sum_{j=1}^\infty(\lambda-\lambda_j)_+=\sum_{j=1}^\infty
\left(\int_\Omega(\lambda+\Delta)\psi_j\psi_jds\right)_+
=\sum_{j=1}^\infty\left(\int_{\mathbb{S}^2}(\lambda+\Delta)\widetilde\psi_j\widetilde\psi_jds\right)_+=\\
=\sum_{j=1}^\infty\left(\int_{\mathbb{S}^2}(\lambda+\Delta)
\sum_{n=0}^\infty\sum_{k=1}^{2n+1}a_{jn}^kY_n^k(s)
\sum_{n'=0}^\infty\sum_{k'=1}^{2n'+1}a_{jn'}^{k'}Y_{n'}^{k'}(s)ds\right)_+=\\
=\sum_{j=1}^\infty\left(
\sum_{n=0}^\infty\bigl(\lambda-n(n+1)\bigr)\sum_{k=1}^{2n+1}(a_{jn}^k)^2\right)_+.
\endaligned
$$
We continue
$$
\aligned
\sum_{j=1}^\infty(\lambda-\lambda_j)_+\le
\sum_{j=1}^\infty
\sum_{n=0}^\infty\bigl(\lambda-n(n+1)\bigr)_+\sum_{k=1}^{2n+1}(a_{jn}^k)^2=\\
=\sum_{n=0}^\infty\bigl(\lambda-n(n+1)\bigr)_+\sum_{k=1}^{2n+1}
\sum_{j=1}^\infty(a_{jn}^k)^2.
\endaligned
$$
We now consider the last double sum
$$
\aligned
\sum_{k=1}^{2n+1}
\sum_{j=1}^\infty(a_{jn}^k)^2=\sum_{k=1}^{2n+1}
\sum_{j=1}^\infty(\psi_j,Y_n^k)^2_{L_2(\Omega)}=
\sum_{k=1}^{2n+1}\int_{\Omega}Y_n^k(s)^2ds=\\
=\int_{\Omega}\sum_{k=1}^{2n+1}Y_n^k(s)^2ds=
\int_{\Omega}\frac{2n+1}{4\pi}ds=(2n+1)\frac{|\Omega|}{4\pi},
\endaligned
$$
where the second equality is the Parceval identity
for the expansion of a fixed function $Y_n^k(s)\vert_{s\in\Omega}$
in the Fourier series with respect to a complete
orthonormal system  $\{\psi_j\}_{j=1}^\infty$ in
$L_2(\Omega)$, while the forth equality is precisely~\eqref{identity}.

As a result, we obtain
\begin{equation}\label{D2}
\sum_{j=1}^\infty(\lambda-\lambda_j)_+\le
\frac{|\Omega|}{4\pi}\sum_{n=0}^\infty\bigl(\lambda-n(n+1)\bigr)_+(2n+1),
\end{equation}
which is the first inequality in \eqref{Riesz} for $\mathbb{S}^2$ and $\sigma=1$.

We now consider the Neumann Laplacian. It is convenient to
denote
$$
\varphi_\lambda(t):=(\lambda-t)_+.
$$

Then
$$
\sum_{j=1}^\infty(\lambda-\mu_j)_+=
\sum_{j=1}^\infty\varphi_\lambda(\mu_j)=
\sum_{j=1}^\infty\varphi_\lambda(\mu_j)\|\omega_j\|^2_{L_2(\Omega)},
$$
where $\omega_j$ are the orthonormal eigenfunctions defined
in~\eqref{DN}. We expand the $\omega_j$'s in the spherical
harmonics:
$$
\omega_j(s)=\sum_{n=0}^\infty\sum_{k=1}^{2n+1}c_{jn}^kY_n^k(s),\ s\in\Omega,
\ \text{where}\
 c_{jn}^k=\int_\Omega\omega_j(s)Y_n^k(s)ds,
$$
so that
$$
1=\|\omega_j\|^2_{L_2(\Omega)}=\sum_{n=0}^\infty\sum_{k=1}^{2n+1}(c_{jn}^k)^2,
$$
and as before
$$
\sum_{k=1}^{2n+1}\sum_{j=1}^\infty (c_{jn}^k)^2=
\int_{\Omega}\sum_{k=1}^{2n+1}Y_n^k(s)^2ds=
(2n+1)\frac{|\Omega|}{4\pi}\,.
$$
Therefore setting
$\sum_{k=1}^{2n+1} (c_{jn}^k)^2=:b_{jn}$, we see that
$$
\sum_{j=1}^\infty b_{jn}=(2n+1)\frac{|\Omega|}{4\pi}\,\quad\text{and}\quad
\sum_{n=0}^\infty b_{jn}=1.
$$
We continue
$$
\aligned
\sum_{j=1}^\infty(\lambda-\mu_j)_+=\sum_{j=1}^\infty\varphi_\lambda(\mu_j)
\sum_{n=0}^\infty b_{jn}=\\=
\sum_{n=0}^\infty
\left(\sum_{j=1}^\infty\varphi_\lambda(\mu_j)b_{jn}\frac{4\pi}{(2n+1)|\Omega|} \right)
\frac{(2n+1)|\Omega|}{4\pi}\,.
\endaligned
$$
We consider the sum with respect to $j$. Since for each fixed $n$
$$
\frac{4\pi}{(2n+1)|\Omega|}\sum_{j=1}^\infty b_{jn}=1,
$$
and the function $\varphi_\lambda$ is convex, Jensen's inequality
gives that
$$
\sum_{j=1}^\infty\varphi_\lambda(\mu_j)b_{jn}\frac{4\pi}{(2n+1)|\Omega|}\ge
\varphi_\lambda\left(\sum_{j=1}^\infty \mu_jb_{jn}\frac{4\pi}{(2n+1)|\Omega|}\right).
$$
On the other hand,
$$
\aligned
\sum_{j=1}^\infty \mu_jb_{jn}=\sum_{j=1}^\infty \mu_j\sum_{k=1}^{2n+1}(c_{jn}^k)^2=
\sum_{j=1}^\infty \mu_j\sum_{k=1}^{2n+1}(\omega_j,Y_n^k)^2_{L_2(\Omega)}=\\=
\sum_{k=1}^{2n+1}\sum_{j=1}^\infty \mu_j(\omega_j,Y_n^k)^2_{L_2(\Omega)}=
n(n+1)\sum_{k=1}^{2n+1}\|Y_n^k\|^2_{L_2(\Omega)}=\\=
n(n+1)\int_\Omega\sum_{k=1}^{2n+1}Y_n^k(s)^2ds=n(n+1)\frac{(2n+1)|\Omega|}{4\pi},
\endaligned
$$
where the last equality is again precisely~\eqref{identity}, and
 the fourth equality follows from the spectral theorem:
$$
\aligned
\sum_{j=1}^\infty\mu_j(\omega_j,Y_n^k)_{L_2(\Omega)}^2=
\left(\sum_{j=1}^\infty\mu_j(\omega_j,Y_n^k)_{L_2(\Omega)}\omega_j,Y_n^k\right)_{L_2(\Omega)}=\\=
(-\Delta Y_n^k,Y_n^k)_{L^2(\Omega)}=n(n+1)\|Y_n^k\|^2_{L_2(\Omega)}.
\endaligned
$$
Therefore
$$
\varphi_\lambda\left(\sum_{j=1}^\infty \mu_jb_{jn}\frac{4\pi}{(2n+1)|\Omega|}\right)=
\varphi_\lambda(n(n+1))=\bigl((\lambda-n(n+1)\bigr)_+,
$$
and we finally obtain
\begin{equation}\label{N2}
\sum_{j=1}^\infty(\lambda-\mu_j)_+\ge
\frac{|\Omega|}{4\pi}\sum_{n=0}^\infty\bigl(\lambda-n(n+1)\bigr)_+(2n+1),
\end{equation}
which is the second inequality in \eqref{Riesz} for $\mathbb{S}^2$ and $\sigma=1$.

To complete the proof it remains to ``lift'' estimates~\eqref{D2},
\eqref{N2} to the powers $\sigma>1$. This is done by using the argument in~\cite{AisLieb}.
For any real number $E$ evaluation of the integral gives the equality
\begin{equation}\label{AisLieb}
E_+^\sigma=\frac1{c_\sigma}\int_0^\infty(E-t)_+t^{\sigma-2}dt,\qquad c_\sigma=B(2,\sigma-1).
\end{equation}
Then in the  Dirichlet case using~\eqref{D2} we have
$$
\aligned
\sum_{j=1}^\infty(\lambda-\lambda_j)_+^\sigma=
\frac1{c_\sigma}\int_0^\infty(\lambda-\lambda_j-t)_+t^{\sigma-2}dt\le\\\le
\frac{|\Omega|}{4\pi}\frac1{c_\sigma}\int_0^\infty\sum_{n=0}^\infty
\bigl(\lambda-t-n(n+1)\bigr)_+(2n+1)t^{\sigma-2}dt=\\=
\frac{|\Omega|}{4\pi}\sum_{n=0}^\infty\bigl(\lambda-n(n+1)\bigr)_+^\sigma (2n+1),
\endaligned
$$
where we used \eqref{AisLieb} twice with $E=\lambda-\lambda_j$ and $E=\lambda-n(n+1)$.

We can do the same in the Neumann case, however, the direct
proof in the case $\sigma=1$ works for $\sigma>1$, since
the function $\varphi_\lambda(t)^\sigma=(\lambda-t)_+^\sigma$ is also convex.
The proof is complete.
\end{proof}

\begin{remark}
{\rm
Inequalities \eqref{Riesz} turn into equalities
for $\Omega=\mathbb{S}^{d-1}$.
}
\end{remark}

We set for $\sigma=1$
\begin{equation}\label{FSd}
F_{\mathbb{S}^{d-1}}(\lambda):=
\frac{|\Omega|}{\sigma_d}\sum_{n=0}^\infty(\lambda-\Lambda_n)_+ k_d(n),
\end{equation}
and give  a more explicit expression for the function
\begin{equation}\label{flam}
f(\lambda):=\sum_{n=0}^\infty(\lambda-\Lambda_n)_+ k_d(n)
\end{equation}
in estimates~\eqref{Riesz} in the case $\sigma=1$.

\begin{lemma}\label{L:piecewise}
The function $f(\lambda)$ is a piecewise linear
function joining the points $[\Lambda_N, f(\Lambda_N)]$
in the plane $(\lambda,f(\lambda))$. For $\lambda=\Lambda_N$,
$N=0,1,\dots$,
\begin{equation}\label{flamN}
f(\Lambda_N)=\sum_{n=0}^{N-1}k_d(n)(\Lambda_N-\Lambda_n).
\end{equation}

\end{lemma}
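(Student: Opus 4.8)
The plan is to exploit the fact that for every fixed $\lambda\ge 0$ only finitely many terms of the series \eqref{flam} are nonzero, and then to analyze $f$ on each interval between consecutive eigenvalues. First I would record that $\Lambda_n=n(n+d-2)$ is strictly increasing in $n$ with $\Lambda_0=0$ and $\Lambda_n\to\infty$, so in particular $f$ is well defined by a finite sum.

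Next, fix $N\ge 1$ and let $\Lambda_{N-1}\le\lambda\le\Lambda_N$. Then $(\lambda-\Lambda_n)_+=\lambda-\Lambda_n$ for $0\le n\le N-1$ and $(\lambda-\Lambda_n)_+=0$ for $n\ge N$, so on this interval
\begin{equation*}
f(\lambda)=\sum_{n=0}^{N-1}(\lambda-\Lambda_n)\,k_d(n)=\Bigl(\sum_{n=0}^{N-1}k_d(n)\Bigr)\lambda-\sum_{n=0}^{N-1}\Lambda_n k_d(n),
\end{equation*}
which is an affine function of $\lambda$ with slope $\sum_{n=0}^{N-1}k_d(n)$. (For $0\le\lambda\le\Lambda_1$ this is the line $f(\lambda)=k_d(0)\lambda=\lambda$.) Thus $f$ is piecewise linear with possible corners only at the points $\lambda=\Lambda_N$.

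It then remains to check continuity at the breakpoints and to evaluate $f$ at the nodes. At $\lambda=\Lambda_N$ the affine expression above (valid on $[\Lambda_{N-1},\Lambda_N]$) and the corresponding one on $[\Lambda_N,\Lambda_{N+1}]$ both give $\sum_{n=0}^{N}(\Lambda_N-\Lambda_n)k_d(n)$, and the $n=N$ summand vanishes, so the one‑sided values agree; hence $f$ is continuous and, being affine on each $[\Lambda_{N-1},\Lambda_N]$, coincides with the polygonal line through the points $[\Lambda_N,f(\Lambda_N)]$. Moreover the slope increases by exactly $k_d(N)>0$ across each node, so $f$ is in fact convex. Finally, setting $\lambda=\Lambda_N$ in the displayed formula and discarding the vanishing term $n=N$ (and the terms $n>N$, which are $0$) yields precisely \eqref{flamN}.

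There is no genuine obstacle in this argument; the only two points worth a word are the finiteness of the defining sum for each $\lambda$, which is immediate from $\Lambda_n\to\infty$, and the matching of the one‑sided affine values at the nodes $\Lambda_N$, which is what guarantees that the piecewise‑affine function is continuous and therefore equal to the claimed polygonal interpolant.
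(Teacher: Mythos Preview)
Your proof is correct and follows essentially the same approach as the paper's own proof, which simply observes that $f$ is linear on each interval $[\Lambda_{N-1},\Lambda_N]$ and that only the terms with $n\le N-1$ contribute at $\lambda=\Lambda_N$. You have supplied considerably more detail (the explicit affine form, the continuity check at the nodes, and the convexity via the slope increment $k_d(N)$), all of which is accurate and the convexity remark is in fact used later in the paper.
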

\begin{proof} The function $f(\lambda)$ is linear on every
interval $\lambda\in [\Lambda_{N-1},\Lambda_n]$. We have
$f(\Lambda_0)=f(0)=0$ and for $\lambda=\Lambda_N$ only the first
$N$ terms with $n=0,\dots,N-1$ in \eqref{flam} are non-zero, which
gives~\eqref{flamN}.
\end{proof}

\setcounter{equation}{0}
\section{ The 2D sphere $\mathbb{S}^2$}

\begin{lemma}\label{L:S2}
On $\mathbb{S}^2$ it holds
\begin{equation}\label{S2flam}
f(\Lambda_N)=\frac{(N(N+1))^2}2=\frac12\Lambda_N^2.
\end{equation}
\end{lemma}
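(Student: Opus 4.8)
The plan is to combine Lemma~\ref{L:piecewise} with the explicit spectral data of $\mathbb{S}^2$ (the case $d=3$), namely $k_3(n)=2n+1$ and $\Lambda_n=n(n+1)$, and reduce the claim to an elementary summation. By formula~\eqref{flamN} the quantity to be computed is
\[
f(\Lambda_N)=\sum_{n=0}^{N-1}(2n+1)\bigl(N(N+1)-n(n+1)\bigr),
\]
and the goal is to identify this with $\tfrac12\bigl(N(N+1)\bigr)^2$. Rather than expand the sum and invoke the closed forms for $\sum n$, $\sum n^2$, $\sum n^3$, I would exploit the piecewise-linear description of $f$ from Lemma~\ref{L:piecewise}.

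Concretely, from the definition~\eqref{flam} one sees that on the interval $[\Lambda_{N-1},\Lambda_N]$ the function $f$ is affine with slope $\sum_{n=0}^{N-1}k_3(n)=\sum_{n=0}^{N-1}(2n+1)=N^2$ (the sum of the first $N$ odd numbers). Since $\Lambda_N-\Lambda_{N-1}=N(N+1)-(N-1)N=2N$ and $f$ is continuous with $f(\Lambda_0)=f(0)=0$, it follows that
\[
f(\Lambda_N)-f(\Lambda_{N-1})=N^2\,(\Lambda_N-\Lambda_{N-1})=2N^3 .
\]
Telescoping this from $N$ down to $0$ and using the classical identity $\sum_{m=1}^N m^3=\bigl(N(N+1)/2\bigr)^2$ gives
\[
f(\Lambda_N)=\sum_{m=1}^N 2m^3=\frac{N^2(N+1)^2}{2}=\frac12\Lambda_N^2,
\]
which is~\eqref{S2flam}.

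An alternative is a short induction on $N$ directly from~\eqref{flamN}: the base case $N=0$ is trivial, and in the inductive step one adds the increment $2N^3$ computed above to $\tfrac12(N-1)^2N^2$ and simplifies. Either way, there is no real obstacle here; the only points requiring a little care are the range of summation in~\eqref{flamN} ($n$ from $0$ to $N-1$) and keeping the elementary identities for sums of odd numbers and of cubes straight. It is worth noting that this particularly clean form is special to the two-dimensional sphere: the same telescoping argument applies on a general $\mathbb{S}^{d-1}$ using~\eqref{mult}, but the resulting expression for $f(\Lambda_N)$ is no longer simply proportional to $\Lambda_N^2$.
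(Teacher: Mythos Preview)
Your proof is correct. The paper's own proof is terse: it simply says the result follows by ``direct calculation'' from~\eqref{flamN} with $\Lambda_n=n(n+1)$ and $k_3(n)=2n+1$, or alternatively by specializing the general formula~\eqref{genform} (proved in the Appendix) to $d=3$.

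Your route is slightly different from either of the paper's suggested options. Rather than expanding the sum and invoking the closed forms for $\sum n^k$ (the ``direct calculation''), or quoting the general formula~\eqref{genform}, you exploit the piecewise-linear structure of $f$ from Lemma~\ref{L:piecewise}: the slope on $[\Lambda_{N-1},\Lambda_N]$ is $\sum_{n=0}^{N-1}(2n+1)=N^2$, the interval has length $2N$, so the increments $f(\Lambda_N)-f(\Lambda_{N-1})=2N^3$ telescope to $2\sum_{m=1}^N m^3=\tfrac12 N^2(N+1)^2$. This is a clean and self-contained argument that avoids both the brute-force expansion and any reliance on the Appendix; the only classical identity you need is $\sum_{m=1}^N m^3=\bigl(N(N+1)/2\bigr)^2$. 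Your closing remark about higher-dimensional spheres is also apt: the same telescoping scheme applies but no longer yields a pure power of $\Lambda_N$, which is precisely why the paper needs the separate analysis in Section~\ref{sec4} and the Appendix.
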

\begin{proof}
This is a direct calculation using~\eqref{flamN} and that $\Lambda_n=n(n+1)$, $k_3(n)=2n+1$.
Alternatively, we may use the general formula~\eqref{genform} which
works for all dimensions.
\end{proof}

\begin{corollary}\label{Cor:geeq}
For all $\lambda\ge0$
\begin{equation}\label{S2flamfull}
F_{\mathbb{S}^2}(\lambda):=\frac{|\Omega|}{4\pi}\sum_{n=0}^\infty\bigl(\lambda-n(n+1)\bigr)_+(2n+1)\ge
\frac 1{8\pi}|\Omega|\lambda^2=\mathrm{L}^\mathrm{cl}_{1,2}|\Omega|\lambda^2,
\end{equation}
while
\begin{equation}\label{S2flamfullequal}
F_{\mathbb{S}^2}(\lambda)=
\frac 1{8\pi}|\Omega|\lambda^2\quad
\text{for}\quad   \lambda=\Lambda_N=N(N+1),\ N=0,1,\dots\,.
\end{equation}
\end{corollary}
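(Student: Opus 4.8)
The plan is to reduce the whole statement to the elementary fact that the piecewise-linear interpolant of a convex function at a set of nodes lies above that function, together with the node identity already established in Lemma~\ref{L:S2}. First I would write $F_{\mathbb{S}^2}(\lambda) = \tfrac{|\Omega|}{4\pi}\,f(\lambda)$ with $f$ as in \eqref{flam} for $d = 3$. By Lemma~\ref{L:piecewise} the graph of $f$ on $[0,\infty)$ is the polygonal line through the points $(\Lambda_N, f(\Lambda_N))$, $N = 0,1,2,\dots$, and by Lemma~\ref{L:S2} we have $f(\Lambda_N) = \tfrac12\Lambda_N^2$, so these nodes lie exactly on the parabola $g(t) := \tfrac12 t^2$. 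This already gives \eqref{S2flamfullequal}, since $F_{\mathbb{S}^2}(\Lambda_N) = \tfrac{|\Omega|}{4\pi}\cdot\tfrac12\Lambda_N^2 = \tfrac{1}{8\pi}|\Omega|\Lambda_N^2$ and $\mathrm{L}^\mathrm{cl}_{1,2} = \Gamma(2)/\bigl((4\pi)\,\Gamma(3)\bigr) = \tfrac{1}{8\pi}$ by \eqref{class}.

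For \eqref{S2flamfull}, I would fix $\lambda \geq 0$, pick $N \geq 1$ with $\lambda \in [\Lambda_{N-1}, \Lambda_N]$, and observe that on this interval $f$ coincides with the chord of the parabola $g$ over $[\Lambda_{N-1},\Lambda_N]$ (the affine interpolant of $g$ at the endpoints, by Lemma~\ref{L:piecewise} and Lemma~\ref{L:S2}). Convexity of $g$ then gives $f(\lambda) \geq g(\lambda) = \tfrac12\lambda^2$, and since these intervals exhaust $[0,\infty)$ as $N$ runs over the positive integers, the bound $f(\lambda) \geq \tfrac12\lambda^2$ holds for all $\lambda \geq 0$; multiplying by $\tfrac{|\Omega|}{4\pi}$ yields \eqref{S2flamfull}. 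Strict convexity of $g$ would in addition show that the inequality is strict for every $\lambda$ not equal to some $\Lambda_N$, consistent with the equality case \eqref{S2flamfullequal}.

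I do not anticipate any genuine obstacle: once Lemma~\ref{L:S2} supplies the node values $f(\Lambda_N) = \tfrac12\Lambda_N^2$, the rest is the one-line convexity comparison. The only thing I would double-check is the first interval $[\Lambda_0,\Lambda_1] = [0,2]$, where $f$ is the chord from $(0,0)$ to $(2,2)$, so $f(\lambda) = \lambda$ and $f(\lambda) - g(\lambda) = \tfrac12\lambda(2-\lambda) \geq 0$ because $\lambda \le 2$ there — which is itself just an instance of the chord-above-graph fact and needs no separate argument.
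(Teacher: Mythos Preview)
Your proposal is correct and follows exactly the approach of the paper: the equality case is Lemma~\ref{L:S2}, and the inequality is the convexity argument that the chord of the parabola $g(t)=\tfrac12 t^2$ over $[\Lambda_{N-1},\Lambda_N]$ lies above its graph. The paper compresses this to a single line (``follows by convexity''), while you have spelled out the details; nothing is missing.
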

\begin{proof} Equality \eqref{S2flamfullequal} is just~\eqref{S2flam}, and
inequality \eqref{S2flamfull} follows by convexity (see the top graph in
Fig.~\ref{fig:GraphsofLegendretrans}).
\end{proof}

Recalling estimate~\eqref{Riesz} for the Neumann eigenvalues $\mu_k$ we
finally obtain the inequality
$$
\sum_{j=1}^\infty(\lambda-\mu_j)_+\ge\frac 1{8\pi}|\Omega|\lambda^2.
$$

We now take the Legendre transform of both sides of this inequality~\cite{L-W}.
We recall that for a convex function $g(x)$ on $\mathbb{R}_+$ the Legendre
transform  $g^\vee(p)$ given by
$$g^\vee(p):=\sup_{x\ge0}(px-g(x)).$$
The Legendre transform of the right-hand side is straightforward.
For the left-hand side we have~\cite{L-W}
\begin{equation}\label{Legendre}
\left(\sum_{j=1}^\infty(\lambda-\mu_j)_+\right)^\vee (p)=
(p-[p])\mu_{[p]+1}+\sum_{k=1}^{[p]}\mu_k.
\end{equation}
If
$g(x)\ge h(x)$, then $g^\vee(p)\le h^\vee(p)$, and setting $p=n\in\mathbb{N}$ we obtain
the following result.
\begin{theorem}\label{T:S2Neumann}
Let $\Omega$ be an open domain on $\mathbb{S}^2$ with two-dimensional
surface area $|\Omega|$  such that the embedding $H^1(\Omega)\hookrightarrow L_2(\Omega)$
is compact. Then the sum of the first $n$ eigenvalues $\mu_k$
of the Neumann Laplacian satisfies the estimate
\begin{equation}\label{LiYauKroger}
\sum_{k=1}^n\mu_k\le\frac{2\pi}{|\Omega|}\,n^2.
\end{equation}
\end{theorem}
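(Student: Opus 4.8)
The plan is to read off the Neumann Riesz-mean bound from Theorem~\ref{T:BL}, sharpen its right-hand side by the convexity estimate of Corollary~\ref{Cor:geeq}, and then apply the Legendre transform, exploiting that the transform reverses inequalities between convex functions. Concretely, specializing the second inequality in~\eqref{Riesz} to $\mathbb{S}^2$ with $\sigma=1$ gives $\sum_{j=1}^\infty(\lambda-\mu_j)_+\ge F_{\mathbb{S}^2}(\lambda)$, and Corollary~\ref{Cor:geeq} yields $F_{\mathbb{S}^2}(\lambda)\ge\frac1{8\pi}|\Omega|\lambda^2$ for every $\lambda\ge0$. Hence, writing
$$
g(\lambda):=\sum_{j=1}^\infty(\lambda-\mu_j)_+\quad\text{and}\quad h(\lambda):=\frac1{8\pi}|\Omega|\lambda^2,
$$
we have $g(\lambda)\ge h(\lambda)$ for all $\lambda\ge0$, with both functions nonnegative, nondecreasing and convex on $\mathbb{R}_+$ (for $g$ this is clear, being a sum of convex functions $(\lambda-\mu_j)_+$).

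Next I would pass to Legendre transforms. Since $g\ge h\ge0$ on $\mathbb{R}_+$, the order-reversing property of the Legendre transform gives $g^\vee(p)\le h^\vee(p)$ for all $p\ge0$. The transform of the quadratic is elementary: $h^\vee(p)=\sup_{\lambda\ge0}\bigl(p\lambda-\frac1{8\pi}|\Omega|\lambda^2\bigr)$ is attained at $\lambda=4\pi p/|\Omega|$ and equals $\frac{2\pi}{|\Omega|}p^2$. For the left-hand side I would invoke formula~\eqref{Legendre} from~\cite{L-W}, namely $g^\vee(p)=(p-[p])\,\mu_{[p]+1}+\sum_{k=1}^{[p]}\mu_k$; evaluating at an integer $p=n$ the fractional part $p-[p]$ vanishes, so $g^\vee(n)=\sum_{k=1}^n\mu_k$. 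Combining the two computations,
$$
\sum_{k=1}^n\mu_k=g^\vee(n)\le h^\vee(n)=\frac{2\pi}{|\Omega|}\,n^2,
$$
which is precisely~\eqref{LiYauKroger}.

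I do not expect a genuine analytic obstacle here: the two substantive ingredients — the pointwise harmonic identity behind Theorem~\ref{T:BL} and the convexity bound of Corollary~\ref{Cor:geeq} — are already in place, and the argument above is a short duality step. The only points requiring a little care are (i) verifying that restricting the supremum defining $g^\vee$ and $h^\vee$ to $\lambda\ge0$ is harmless and that the transform really reverses the inequality on the half-line (both hold because $p\ge0$ and $g,h$ are convex with $g(0)=h(0)=0$), and (ii) the bookkeeping in~\eqref{Legendre}, ensuring the interpolating term $(p-[p])\mu_{[p]+1}$ drops out exactly at integer $p$. The compactness assumption on the embedding $H^1(\Omega)\hookrightarrow L_2(\Omega)$ enters only to guarantee that the Neumann spectrum is discrete, so that the finite sums $\sum_{k=1}^n\mu_k$ and the series in~\eqref{Riesz} are well defined.
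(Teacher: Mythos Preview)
Your proposal is correct and follows essentially the same route as the paper: combine the Neumann case of Theorem~\ref{T:BL} at $\sigma=1$ with Corollary~\ref{Cor:geeq} to get $\sum_j(\lambda-\mu_j)_+\ge\frac{|\Omega|}{8\pi}\lambda^2$, then apply the Legendre transform using~\eqref{Legendre} and evaluate at integer $p=n$. Your write-up is in fact slightly more detailed than the paper's (the explicit computation of $h^\vee$ and the remarks on where compactness enters), but the argument is the same.
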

\begin{remark}
{\rm
We point out here that while the ordering of the numbers $\mu_j$'s in the left-hand side
in~\eqref{Legendre}
does not play a role, the Legendre transform on the right-hand side
 automatically orders them in the non-decreasing way.
}
\end{remark}

For the lower bound for the Dirichlet eigenvalues we
need an explicit expression  for the Legendre transform $F_{\mathbb{S}^2}^\vee(p)$
of the function $F_{\mathbb{S}^2}(\lambda)$.

Setting
\begin{equation}\label{alpha}
\alpha:=\frac{|\Omega|}{8\pi}
\end{equation}
we first observe that for $\lambda\in[\Lambda_{N-1},\Lambda_N]$
\begin{equation}\label{exprflam}
F_{\mathbb{S}^2}(\lambda)=
\alpha\bigl((\Lambda_{N-1}+\Lambda_N)\lambda-\Lambda_{N-1}\Lambda_N\bigr)
=\alpha\bigl(2N^2\lambda-N^2(N^2-1)\bigr).
\end{equation}
In fact, in view of~\eqref{S2flamfullequal} we only have to verify that
$F_{\mathbb{S}^2}(\lambda)$ in~\eqref{exprflam} satisfies
$F_{\mathbb{S}^2}(\Lambda_{N-1})=\alpha\Lambda_{N-1}^2$ and
$F_{\mathbb{S}^2}(\Lambda_{N})=\alpha\Lambda_{N}^2$.
Next,  the slope
of the straight edge of the graph of $F_{\mathbb{S}^2}(\lambda)$
on   each interval $\lambda\in[\Lambda_{N-1},\Lambda_N]$
is equal to
$$
\frac{\alpha(\Lambda_{N}^2-\Lambda_{N-1}^2)}{\Lambda_{N}-\Lambda_{N-1}}=
\alpha({\Lambda_{N}+\Lambda_{N-1}})
=\alpha\bigl(N(N+1)+(N-1)N\bigr)=2\alpha N^2.
$$

We now find the expression for $F_{\mathbb{S}^2}^\vee(p)$. The
function $F_{\mathbb{S}^2}^\vee(p)$ is a continuous monotone
increasing piecewise linear function whose graph is a polygonal
line with break points located at those points $p$ where the line
$p\lambda$ is parallel to the corresponding straight edge of the
graph of the function $F_{\mathbb{S}^2}(\lambda)$. In other words, when
the slope $p$ is equal to $2\alpha N^2$. See Fig.~\ref{fig:GraphsofLegendretrans}.

\begin{figure}[htb]
\def\svgwidth{13cm}
\def\svgheight{9cm}
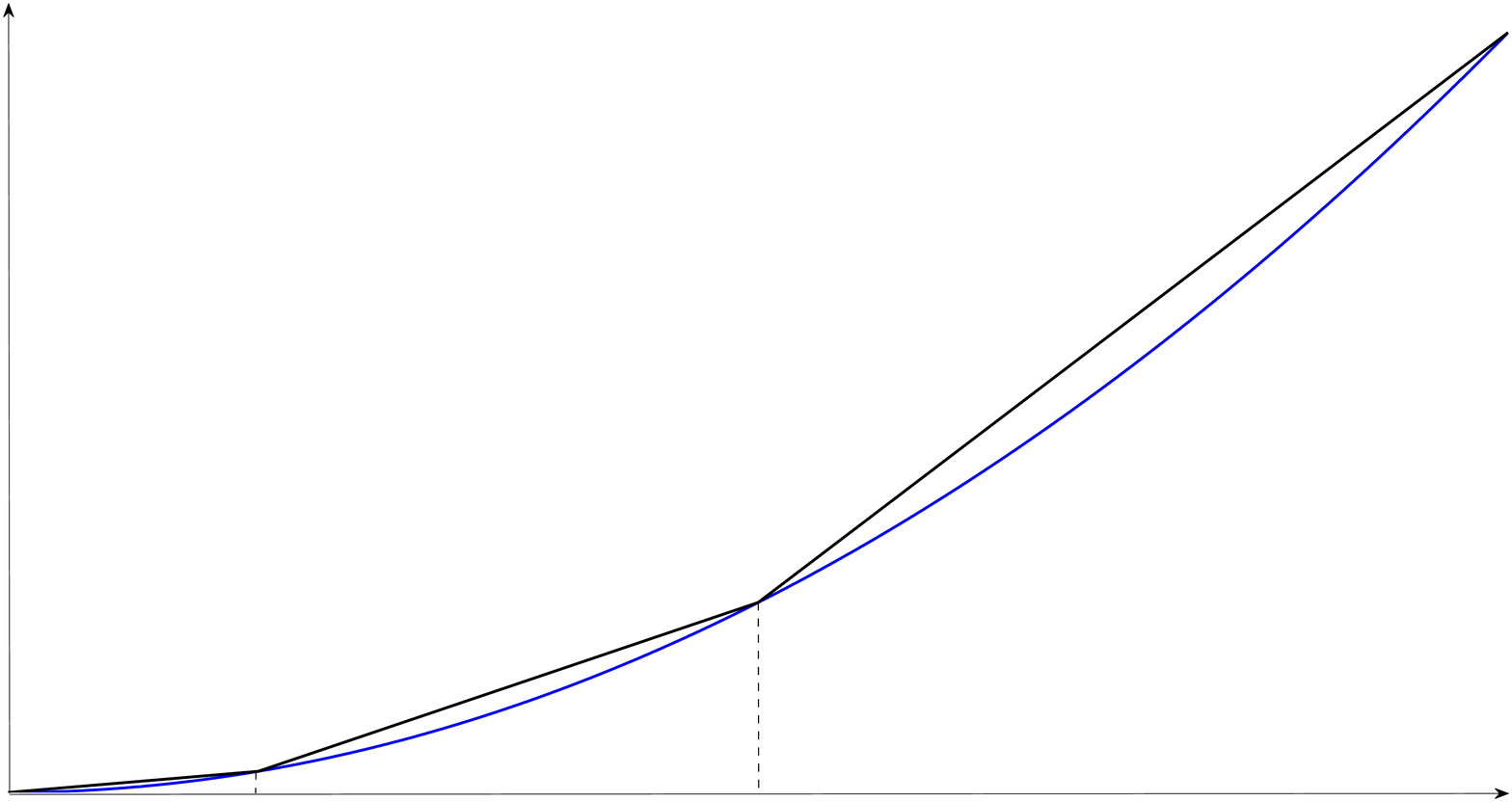
\def\svgwidth{13cm}
\def\svgheight{9cm}
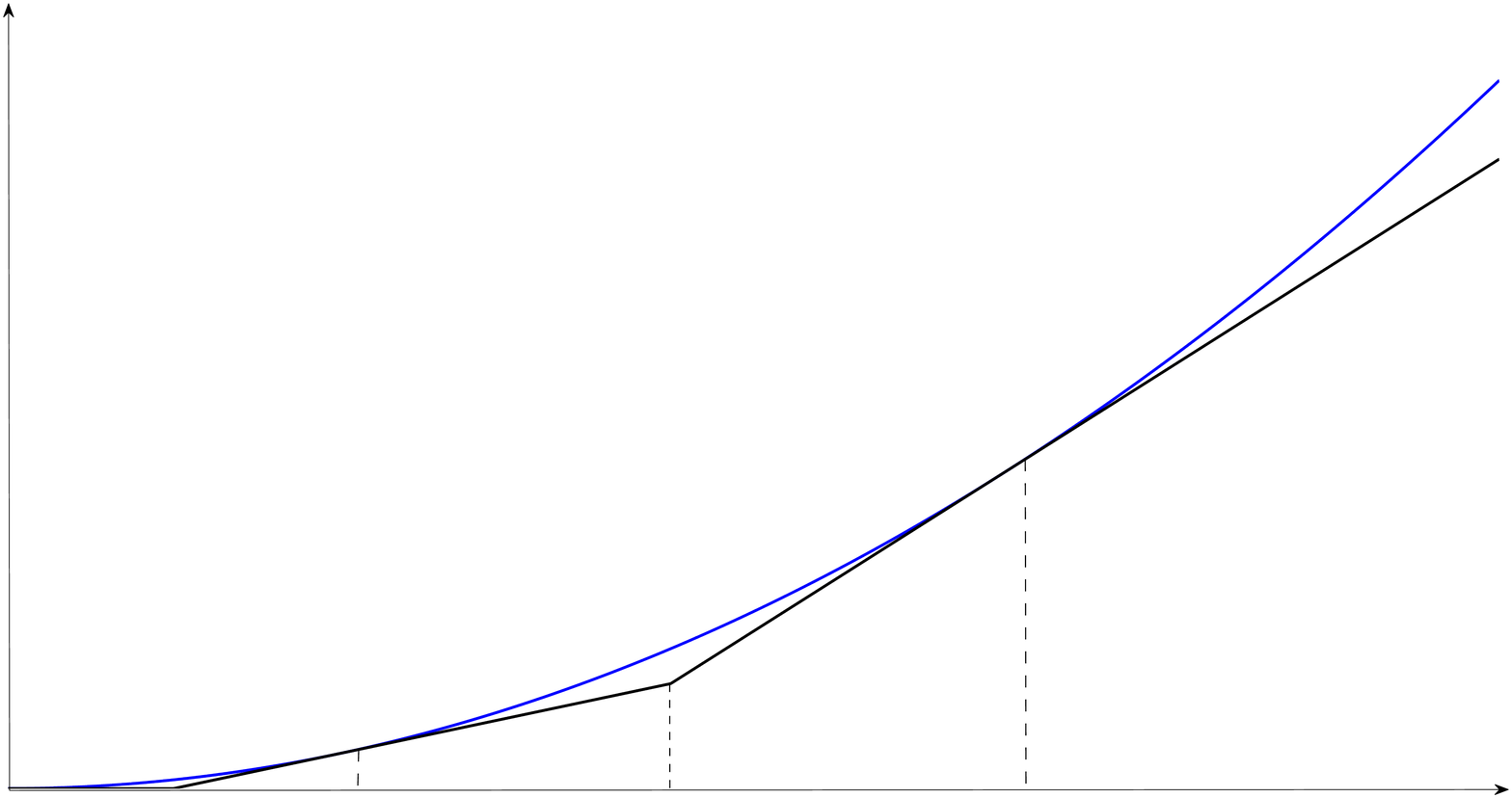
\caption{The graph of  $\lambda\to\alpha\lambda^2$ and the polygonal
graph of $F_{\mathbb{S}^2}(\lambda)$ are shown in the top figure
with break points at $\lambda=\Lambda_1,\Lambda_2\dots\,\,$. The Legendre
transforms $(\alpha\lambda^2)^\vee(p)=p^2/(4\alpha)$ and
$(F_{\mathbb{S}^2})^\vee(p)$ are shown in the bottom figure.
The  graph of $(F_{\mathbb{S}^2})^\vee(p)$
has break points at $p=2\alpha N^2$
and is tangent to the curve $p^2/(4\alpha)$ at $p=2\alpha\Lambda_N$, $N=0,1,\dots\,$.}
\label{fig:GraphsofLegendretrans}
\end{figure}

Having said that we have
$$
\aligned
F_{\mathbb{S}^2}^\vee(p)=\max_{\lambda\in[\Lambda_{N-1},\,\Lambda_N]}
\bigl(p-\alpha(\Lambda_{N-1}+\Lambda_N)\bigr)\lambda+\alpha\Lambda_{N-1}\Lambda_N,\\
p\in\bigl[\alpha(\Lambda_{N-2}+\Lambda_{N-1}),\alpha(\Lambda_{N-1}+\Lambda_{N})\bigr]=
\bigl[2\alpha(N-1)^2,2\alpha N^2\bigr].
\endaligned
$$
The coefficient of $\lambda$ is negative, therefore $\lambda$ has to be the
smallest possible: $\lambda=\Lambda_{N-1}$. We finally obtain
$$
\aligned
F_{\mathbb{S}^2}^\vee(p)&=N(N-1)\bigl(p-\alpha N(N-1)\bigr),\\
 p&\in[2\alpha(N-1)^2,2\alpha N^2],\ N=1,2,\dots\,.
\endaligned
$$
Observe that $F_{\mathbb{S}^2}^\vee(p)=0$ for $p\in[0,2\alpha]$.

We estimate $F_{\mathbb{S}^2}^\vee(p)$ from below. Let
$p\in[2\alpha(N-1)^2,2\alpha N^2]$, that is,
\begin{equation}\label{N1}
\sqrt{\frac p{2\alpha}}\le N\le\sqrt{\frac p{2\alpha}}+1.
\end{equation}
For a fixed $p$ we now look at the function
$F_{\mathbb{S}^2}^\vee(p)=N(N-1)\bigl(p-\alpha N(N-1)\bigr)$, where
$N$ satisfies~\eqref{N1}, as a quadratic parabola $x\to xp-\alpha
x^2$ opening down. Then
\begin{multline*}
F_{\mathbb{S}^2}^\vee(p)\ge
\min\biggl\{N(N-1)\bigl(p-\alpha N(N-1)\bigr)\vert_{N=\sqrt{\frac p{2\alpha}}},\\N(N-1)\bigl(p-\alpha N(N-1)\bigr)\vert_{N=\sqrt{\frac p{2\alpha}}+1}\biggr\}.
\end{multline*}
However, both terms on the right-hand side are equal:
for $u:=\sqrt{p/(2\alpha)}$ and $\alpha=p/(2u^2)$ we have
$$
u(u-1)\left(p-\frac p{2u^2}u(u-1)\right)=
u(u+1)\left(p-\frac p{2u^2}u(u+1)\right)=\frac p2(u^2-1).
$$
Therefore
$$
F_{\mathbb{S}^2}^\vee(p)\ge\frac p2(u^2-1)=\frac1{4\alpha}p(p-2\alpha)=
\frac{2\pi}{|\Omega|}\,p\left(p-\frac{|\Omega|}{4\pi}\right).
$$
Thus, we have proved the following lower bound
for the sums of the Dirichlet eigenvalues.
\begin{theorem}\label{T:DirS2}
Let $\Omega\subseteq\mathbb{S}^2$  be an arbitrary domain. Then for
any $n\ge1$ the sum of the first $n$ Dirichlet eigenvalues
satisfies
\begin{equation}\label{lowerDirS2}
\sum_{k=1}^n\lambda_k\ge\frac{2\pi}{|\Omega|}\,n\left(n-\frac{|\Omega|}{4\pi}\right).
\end{equation}
\end{theorem}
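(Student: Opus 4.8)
The plan is to derive \eqref{lowerDirS2} from the Riesz-mean bound of Theorem~\ref{T:BL} by Legendre duality, exactly as the Neumann estimate \eqref{LiYauKroger} was obtained. Taking $\sigma=1$ in the Dirichlet half of \eqref{Riesz} gives, in the notation of Corollary~\ref{Cor:geeq},
\[
\sum_{j=1}^\infty(\lambda-\lambda_j)_+\le F_{\mathbb{S}^2}(\lambda),\qquad\lambda\ge0 .
\]
Both sides are nonnegative convex functions on $\mathbb{R}_+$, and the Legendre transform reverses inequalities, so $\bigl(\sum_{j}(\lambda-\lambda_j)_+\bigr)^\vee(p)\ge F_{\mathbb{S}^2}^\vee(p)$. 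By the same computation that yielded \eqref{Legendre}, with $\mu_k$ replaced by $\lambda_k$, the left-hand side equals $(p-[p])\lambda_{[p]+1}+\sum_{k=1}^{[p]}\lambda_k$, which at an integer $p=n$ is just $\sum_{k=1}^n\lambda_k$. Hence it suffices to bound $F_{\mathbb{S}^2}^\vee(n)$ from below.

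Next I would compute $F_{\mathbb{S}^2}^\vee$ explicitly. From \eqref{exprflam}, on each interval $[\Lambda_{N-1},\Lambda_N]$ the graph of $F_{\mathbb{S}^2}$ is the affine segment of slope $2\alpha N^2$ joining $(\Lambda_{N-1},\alpha\Lambda_{N-1}^2)$ and $(\Lambda_N,\alpha\Lambda_N^2)$, with $\alpha=|\Omega|/(8\pi)$. For a piecewise-linear convex function the supremum in $g^\vee(p)=\sup_{\lambda\ge0}(p\lambda-g(\lambda))$ is attained at the vertex where the slope crosses the value $p$; thus for $p\in[2\alpha(N-1)^2,2\alpha N^2]$ it is attained at $\lambda=\Lambda_{N-1}$, and
\[
F_{\mathbb{S}^2}^\vee(p)=N(N-1)\bigl(p-\alpha N(N-1)\bigr),\qquad p\in[2\alpha(N-1)^2,\,2\alpha N^2].
\]

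Finally I would estimate this from below at $p=n$. Fix $p$, put $u=\sqrt{p/(2\alpha)}$, so by \eqref{N1} the relevant index satisfies $u\le N\le u+1$. Writing $x=N(N-1)$ one sees that $F_{\mathbb{S}^2}^\vee(p)=xp-\alpha x^2$ is a downward parabola in $x$, hence a concave function of $N$ on $[u,u+1]$, so its minimum over that interval occurs at an endpoint $N=u$ or $N=u+1$. A direct computation shows the two endpoint values coincide and both equal $\tfrac p2(u^2-1)=\tfrac1{4\alpha}p(p-2\alpha)=\tfrac{2\pi}{|\Omega|}\,p\bigl(p-\tfrac{|\Omega|}{4\pi}\bigr)$. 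Setting $p=n$ and combining with the duality step gives \eqref{lowerDirS2}. The one genuinely delicate point is obtaining the explicit formula for $F_{\mathbb{S}^2}^\vee$ — in particular noting that its break points lie at $p=2\alpha N^2$, not at integers — and then spotting that concavity in $x=N(N-1)$ reduces the bound to the two endpoints, where the values agree; everything else is routine.
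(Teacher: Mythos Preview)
Your argument is essentially identical to the paper's: both pass from the Dirichlet Riesz-mean bound to its Legendre transform, compute $F_{\mathbb{S}^2}^\vee(p)=N(N-1)\bigl(p-\alpha N(N-1)\bigr)$ on each interval $p\in[2\alpha(N-1)^2,2\alpha N^2]$, and bound it from below by evaluating the downward parabola $x\mapsto xp-\alpha x^2$ at the two endpoint values $N=u$ and $N=u+1$, which coincide and give $\tfrac{1}{4\alpha}p(p-2\alpha)$. One small correction: concavity in $x$ does \emph{not} imply concavity in $N$ (and a direct check shows $g(N)=N(N-1)\bigl(p-\alpha N(N-1)\bigr)$ fails to be concave on $[u,u+1]$ for some $u$); the endpoint-minimum conclusion nevertheless holds because $N\mapsto N(N-1)$ is monotone on $[u,u+1]$, so the image of this interval under $x(N)$ is exactly $[u(u-1),u(u+1)]$ and the concave function of $x$ attains its minimum there at one of the endpoints.
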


\begin{corollary} Since $n\lambda_n\ge\sum_{k=1}^n\lambda_k$, the lower bound~\eqref{lowerDirS2}
gives that for each $n\ge1$
\begin{equation}\label{indlower}
\lambda_n\ge\frac{2\pi}{|\Omega|}\,\left(n-\frac{|\Omega|}{4\pi}\right).
\end{equation}
\end{corollary}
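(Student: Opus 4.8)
The plan is to deduce the bound on the single eigenvalue $\lambda_n$ directly from the Li--Yau-type lower bound for the partial sums established in Theorem~\ref{T:DirS2}, using only the monotonicity of the Dirichlet spectrum. Recall from~\eqref{DN} that the eigenvalues are arranged so that $0<\lambda_1\le\lambda_2\le\dots$; consequently $\lambda_n$ is the largest of the first $n$ eigenvalues, and therefore
$$
n\lambda_n=\underbrace{\lambda_n+\dots+\lambda_n}_{n}\ \ge\ \lambda_1+\lambda_2+\dots+\lambda_n=\sum_{k=1}^n\lambda_k.
$$

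First I would combine this elementary inequality with~\eqref{lowerDirS2} to get
$$
n\lambda_n\ \ge\ \sum_{k=1}^n\lambda_k\ \ge\ \frac{2\pi}{|\Omega|}\,n\left(n-\frac{|\Omega|}{4\pi}\right),
$$
and then divide through by $n\ge1$, which yields exactly~\eqref{indlower}. No convexity, Legendre transform, or spectral identity is needed at this stage — all the analytic work has already been absorbed into Theorem~\ref{T:DirS2} — so there is essentially no obstacle; the only point to keep in mind is that the argument relies on the eigenvalues being listed in non-decreasing order, which is precisely the convention fixed in~\eqref{DN}. As a sanity check one may set $n=1$ and recover the bound~\eqref{lambda1intro} for the first Dirichlet eigenvalue, which exhibits the correct degeneration $\lambda_1\to0$ as $|\Omega|\to 4\pi=|\mathbb{S}^2|$.
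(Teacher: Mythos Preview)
Your proof is correct and is exactly the argument the paper intends: the corollary's own statement already records the key inequality $n\lambda_n\ge\sum_{k=1}^n\lambda_k$, and you have simply spelled out the one-line deduction from~\eqref{lowerDirS2}. The only microscopic discrepancy is that~\eqref{DN} writes $0\le\lambda_1$ rather than $0<\lambda_1$, but this is irrelevant to the argument, which uses only the non-decreasing ordering.
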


Setting $n=1$ we obtain
\begin{equation}\label{lambda1}
\lambda_1=\lambda_1(\Omega)\ge\frac{2\pi}{|\Omega|}\,\left(1-\frac{|\Omega|}{4\pi}\right).
\end{equation}
The right-hand side vanishes as $|\Omega|\to4\pi$ which is not
surprising since on the whole sphere the first eigenvalue is zero.

\begin{remark}
{\rm If $\Omega=\mathbb{S}^2$ with $|\Omega|=4\pi$, then  the
eigenvalues are  $\Lambda_n=n(n+1)$ with multiplicities $2n+1$
starting from $n=0$. Since
$$\sum_{n=0}^{N-1}(2n+1)=N^2\quad\text{and}\quad
\sum_{n=0}^{N-1}n(n+1)(2n+1)=\frac12{N^2(N^2-1)},
$$
it follows that inequality~\eqref{lowerDirS2} turns into the
equality for $n=N^2$.
 }
\end{remark}

\begin{remark}
{\rm In view of \eqref{S2flamfullequal}
the points $(\lambda, F_{\mathbb{S}^2}(\lambda))$ with $\lambda=\Lambda_N$
lie on the parabola $\alpha\lambda^2$. Therefore
$F_{\mathbb{S}^2}^\vee(p)=(\alpha\lambda^2)^\vee(p)$ for $p=2\alpha\Lambda_N=2\alpha N(N+1)$.
If $\alpha=1/4$, then $p\in\mathbb{N}$ for all $N=1,2,\dots\,$.
Therefore we have shown that for  $|\Omega|=2\pi=|\mathbb{S}^2|/2$  and  $n=N(N+1)/2$
 the sum of the first $n$ Dirichlet eigenvalues  satisfies the lower bound
$$
\sum_{k=1}^n\lambda_k\ge n^2, \quad\text{where}\ n={N(N+1)}/2,\ N=1,2,\dots\,.
$$
In particular, $\lambda_1\ge1$, while the universal estimate
\eqref{lambda1} gives $\lambda_1\ge1/2$.
}
\end{remark}

\subsection*{The vector case}
In the vector case we first define the Laplace operator acting on
(tangent) vector fields on $\mathbb{S}^2$ as the Laplace--de Rham
operator $-d\delta-\delta d$ identifying $1$-forms and vectors.
Then for a two-dimensional manifold (not necessarily
$\mathbb{S}^2$) we have \cite{I93}
\begin{equation}\label{vecLap}
\mathbf{\Delta} u=\nabla\div u-\rot\rot u,
\end{equation}
where the operators $\nabla=\grad$ and $\div$ have the conventional
meaning. The operator $\rot$ of a vector $u$ is a scalar  and for a
scalar $\psi$, $\rot\psi$ is a vector:
$$
\rot u:=-\div(\mathrm{n}\times u)=\div u^\perp,\qquad
\rot\psi:=-\mathrm{n}\times\nabla\psi=\nabla^\perp\psi,
$$
where $\mathrm{n}$ is the unit outward normal vector, so that in the local
frame
$$-\mathrm{n}\times u=(u_2,-u_1)=:u^\perp.
$$
 We note that for a scalar $\psi$ it
holds
\begin{equation}\label{rotrot}
\rot\rot\psi=-\Delta\psi\ (=-\div\grad\psi).
\end{equation}
 Integrating by parts, that is, using
$$
(\nabla\psi,u)_{L_2(T\mathbb{S}^2)}=-(\psi,\div
u)_{L_2(\mathbb{S}^2)},\quad
(\rot\psi,u)_{L_2(T\mathbb{S}^2)}=(\psi,\rot
u)_{L_2(\mathbb{S}^2)},
$$
we obtain
\begin{equation}\label{byparts}
(-\mathbf{\Delta} u,u)_{L_2(T\mathbb{S}^2)}=\|\rot u\|^2+\|\div u\|^2.
\end{equation}

The vector Laplacian has a complete in $L_2(T\mathbb{S}^2)$
orthonormal basis of vector-valued eigenfunctions. Using the
notation
\begin{equation}\label{not}
\{y_i\}_{i=1}^\infty=\{
Y_n^1,\dots,Y_n^{2n+1}\}_{n=1}^\infty,\quad
\{\lambda_i\}_{i=1}^\infty=
\underset{\!2n+1\ \text{times}}{\{\Lambda_n,
\dots,\Lambda_n\}_{n=1}^{\infty}}.
\end{equation}
 we have
\begin{equation}\label{bas-vec}
\aligned
-\mathbf{\Delta} w_j=\lambda_j w_j,\qquad -\mathbf{\Delta}
v_j=\lambda_j v_j,
\endaligned
\end{equation}
where
$$
 w_j=\lambda_j^{-1/2}\nabla^\perp y_j,\ \ \div w_j=0,\qquad
v_j=\lambda_j^{-1/2}\nabla y_j,\ \ \rot v_j=0.
$$

Both~(\ref{bas-vec}), and the orthonormality of the $w_j$'s and
$v_j$'s follow from~(\ref{rotrot}), and~(\ref{rotrot}) also implies
the following commutation relations
$$
\mathbf{\Delta}\nabla= \nabla\Delta, \qquad
\mathbf{\Delta}\nabla^\perp=\nabla^\perp\Delta,
$$
which proves~\eqref{bas-vec}. For example, for the second relation
we have
\begin{equation}\label{commut}
\aligned
\mathbf{\Delta} \nabla^\perp \psi=-\rot\rot\nabla^\perp \psi=
-\rot\rot\rot \psi=\rot\Delta \psi=\nabla^\perp \Delta\psi.
\endaligned
\end{equation}
We also point out that the fact that we are dealing with the sphere
$\mathbb{S}^2$ does not play a role and \eqref{vecLap} --
\eqref{commut} hold for any 2D manifold $M$.

Hence,  on $\mathbb{S}^2$, corresponding to the eigenvalue
$\Lambda_n=n(n+1)$, where $n=1,2,\dots$, there are two families of
$2n+1$ orthonormal vector-valued eigenfunctions  $w_n^k(s)$ and $v_n^k(s)$, where
$k=1,\dots,2n+1$ and~(\ref{identity-vec}) gives the following
important identities: for any $s\in\mathbb{S}^2$
\begin{equation}\label{id-vec}
\sum_{k=1}^{2n+1}|w_n^k(s)|^2=\frac{2n+1}{4\pi},\qquad
\sum_{k=1}^{2n+1}|v_n^k(s)|^2=\frac{2n+1}{4\pi}.
\end{equation}
We finally observe that since the sphere is simply connected,
it follows that
$$
\left\{\div u=0,\ \rot u=0\right\}\Rightarrow u=0,
$$
and therefore $-\mathbf{\Delta}$ is strictly positive $-\mathbf{\Delta}\ge
\Lambda_1I=2I$. This fact explains why the Li--Yau bounds below look
exactly the same as in the case of a bounded domain in $\mathbb{R}^2$.

We now consider the Dirichlet eigenvalues in the vector case:
$$
-\mathbf{\Delta}u_j=\lambda_ju_j,
\qquad u_j\vert_{\partial\Omega}=0,
$$
where the vector-valued eigenfunctions $u_j$, $j=1,\dots$ make up a
complete orthonormal family in $L_2(\Omega,T\mathbb{S}^2)$.
\begin{theorem}\label{T:Riesz-vec}
Let $\Omega\subseteq\mathbb{S}^2$  be an arbitrary domain. Then for
$\lambda\ge0$
\begin{equation}\label{D2-vec}
\sum_{j=1}^\infty(\lambda-\lambda_j)_+\le
2\cdot\frac{|\Omega|}{4\pi}\sum_{n=1}^\infty\bigl(\lambda-n(n+1)\bigr)_+(2n+1).
\end{equation}
\end{theorem}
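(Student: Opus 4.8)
The plan is to repeat, almost verbatim, the proof of the scalar Dirichlet bound~\eqref{D2} in Theorem~\ref{T:BL}, with the orthonormal spherical harmonics $Y_n^k$ replaced by the \emph{two} families of vector eigenfields $w_n^k$ and $v_n^k$ from~\eqref{bas-vec}, and with the pointwise identity~\eqref{identity} replaced by the pair of identities~\eqref{id-vec}. It is precisely the presence of two families of eigenfields, equivalently the factor $2$ obtained by adding the two identities in~\eqref{id-vec}, that accounts for the extra factor $2$ on the right-hand side of~\eqref{D2-vec}.

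Concretely, I would first extend each Dirichlet eigenfield $u_j$ by zero to a field $\widetilde u_j$ on $T\mathbb{S}^2$. Since $u_j\in H^1_0(\Omega,T\mathbb{S}^2)$, the extension $\widetilde u_j$ lies in $H^1(\mathbb{S}^2,T\mathbb{S}^2)$, which is the form domain of $-\mathbf{\Delta}$ on the whole sphere, and $\rot\widetilde u_j$, $\div\widetilde u_j$ are the zero extensions of $\rot u_j$, $\div u_j$. Hence, interpreting $\int_{\mathbb{S}^2}(\lambda+\mathbf{\Delta})\widetilde u_j\cdot\widetilde u_j\,ds$ through the quadratic form~\eqref{byparts} applied on $\Omega$ and on $\mathbb{S}^2$ (exactly the mild abuse already used in the scalar case), one gets
$$
\int_{\mathbb{S}^2}(\lambda+\mathbf{\Delta})\widetilde u_j\cdot\widetilde u_j\,ds=\lambda\|\widetilde u_j\|^2-\|\rot\widetilde u_j\|^2-\|\div\widetilde u_j\|^2=\lambda-\lambda_j .
$$
Next, since $\{w_n^k,\,v_n^k:\ n\ge1,\ k=1,\dots,2n+1\}$ is a complete orthonormal system in $L_2(T\mathbb{S}^2)$ (this is the Hodge decomposition of a tangent field on the simply connected sphere into a gradient and a skew-gradient part, together with completeness of $\{Y_n^k\}_{n\ge1}$ in the orthogonal complement of the constants — exactly the basis used in~\eqref{bas-vec}), I would expand
$$
\widetilde u_j=\sum_{n=1}^\infty\sum_{k=1}^{2n+1}\bigl(a_{jn}^k w_n^k+b_{jn}^k v_n^k\bigr),\qquad a_{jn}^k=(u_j,w_n^k)_{L_2(\Omega,T\mathbb{S}^2)},\quad b_{jn}^k=(u_j,v_n^k)_{L_2(\Omega,T\mathbb{S}^2)}.
$$
Using that $w_n^k$, $v_n^k$ are eigenfields for $\Lambda_n=n(n+1)$, the integral above equals $\sum_n(\lambda-n(n+1))\sum_k\bigl((a_{jn}^k)^2+(b_{jn}^k)^2\bigr)$; dropping the subscript $+$ inside while keeping it outside and interchanging the order of the (nonnegative-term) summations then gives
$$
\sum_{j=1}^\infty(\lambda-\lambda_j)_+\le\sum_{n=1}^\infty\bigl(\lambda-n(n+1)\bigr)_+\sum_{k=1}^{2n+1}\sum_{j=1}^\infty\bigl((a_{jn}^k)^2+(b_{jn}^k)^2\bigr).
$$

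To finish, for fixed $n,k$ I would apply the Parseval identity in $L_2(\Omega,T\mathbb{S}^2)$ relative to the complete orthonormal family $\{u_j\}$, obtaining $\sum_j(a_{jn}^k)^2=\int_\Omega|w_n^k(s)|^2\,ds$ and $\sum_j(b_{jn}^k)^2=\int_\Omega|v_n^k(s)|^2\,ds$; summing over $k$ and using both identities in~\eqref{id-vec},
$$
\sum_{k=1}^{2n+1}\sum_{j=1}^\infty\bigl((a_{jn}^k)^2+(b_{jn}^k)^2\bigr)=\int_\Omega\Bigl(\sum_{k=1}^{2n+1}|w_n^k|^2+\sum_{k=1}^{2n+1}|v_n^k|^2\Bigr)ds=2\cdot\frac{2n+1}{4\pi}\,|\Omega| ,
$$
and substituting this back yields~\eqref{D2-vec}. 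I do not expect a genuine obstacle: the only points that need care are (i) that the zero extension of a Dirichlet eigenfield remains in the form domain of the Laplace--de Rham operator on $\mathbb{S}^2$ and preserves the energy $\|\rot\cdot\|^2+\|\div\cdot\|^2$, which legitimizes the Berezin-type step just as in the scalar case, and (ii) the completeness of the combined system $\{w_n^k\}\cup\{v_n^k\}$, which is exactly the strict positivity of $-\mathbf{\Delta}$ recorded just before the theorem. Everything else is the same bookkeeping as in Theorem~\ref{T:BL}.
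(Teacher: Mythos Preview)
Your proposal is correct and follows exactly the approach of the paper's own proof: zero extension of the Dirichlet eigenfields, expansion in the combined orthonormal basis $\{w_n^k,v_n^k\}$ from~\eqref{bas-vec}, the Berezin-type step $\bigl(\sum_n\cdots\bigr)_+\le\sum_n(\cdots)_+$, and then Parseval in $L_2(\Omega,T\mathbb{S}^2)$ together with the two identities~\eqref{id-vec} to produce the factor $2\cdot(2n+1)|\Omega|/4\pi$. Your write-up is in fact slightly more explicit than the paper's on the two delicate points you flag (preservation of the quadratic form under zero extension, and completeness of $\{w_n^k\}\cup\{v_n^k\}$ via simple connectedness), but the argument is the same.
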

\begin{proof}
Before we proceed with the proof we point out the factor $2$
on the right-hand side and the fact that the summation starts with
$n=1$.

The proof in turn repeats that of Theorem~\ref{T:BL} and therefore will
only be outlined.
Let $\widetilde u_j$ be the extension by zero of $u_j$.
We expand each $\widetilde u_j$ with respect to the orthonormal
basis \eqref{bas-vec}:
$$
\widetilde u_j(s)=\sum_{n=1}^\infty\sum_{k=1}^{2n+1}
\left(a_{jn}^kw_n^k(s)+c_{jn}^kv_n^k(s)\right),\quad s\in\mathbb{S}^2,
$$
where
$$
a_{jn}^k=\bigl(\widetilde u_j(s),w_n^k\bigr)_{L_2(\Omega,T\mathbb{S}^2)},
\qquad
c_{jn}^k=\bigl(\widetilde u_j(s),v_n^k\bigr)_{L_2(\Omega,T\mathbb{S}^2)}.
$$
Then as in the proof of the Dirichlet case in Theorem~\ref{T:BL} we obtain
$$
\sum_{j=1}^\infty(\lambda-\lambda_j)_+\le
\sum_{n=1}^\infty\bigl(\lambda-n(n+1)\bigr)_+\sum_{k=1}^{2n+1}
\sum_{j=1}^\infty\left((a_{jn}^k)^2+(c_{jn}^k)^2\right).
$$

For the double sum  we use the
 identity  \eqref{id-vec} and find that
$$
\aligned
\sum_{k=1}^{2n+1}
\sum_{j=1}^\infty\bigl((a_{jn}^k)^2+(c_{jn}^k)^2\bigr)=\sum_{k=1}^{2n+1}
\sum_{j=1}^\infty\bigl((u_j,w_n^k)^2_{L_2(\Omega,T\mathbb{S}^2)}+
(u_j,v_n^k)^2_{L_2(\Omega,T\mathbb{S}^2)}\bigr)=\\=
\sum_{k=1}^{2n+1}\int_{\Omega}\bigl(|w_n^k(s)|^2+|v_n^k(s)|^2\bigr)ds=
2(2n+1)\frac{|\Omega|}{4\pi}.
\endaligned
$$
 \end{proof}

The following lemma shows that the omission of the zeroth term in
the sum in~\eqref{D2-vec} reverses the inequality
in~\eqref{S2flamfull}, see Fig.~\ref{fig:below}.
\begin{lemma}\label{L:asinR2}
For all $\lambda\ge0$
\begin{equation}\label{asinR2}
F'_{\mathbb{S}^2}(\lambda):=\frac{|\Omega|}{4\pi}\sum_{n=1}^\infty\bigl(\lambda-n(n+1)\bigr)_+(2n+1)
\le\frac 1{8\pi}|\Omega|\lambda^2=\mathrm{L}^\mathrm{cl}_{1,2}|\Omega|\lambda^2.
\end{equation}
\end{lemma}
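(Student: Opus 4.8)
The plan is to reduce \eqref{asinR2} to an elementary per-interval inequality. For $\lambda\ge0$ the only term removed from $F_{\mathbb{S}^2}$ in passing to $F'_{\mathbb{S}^2}$ is the $n=0$ term $\frac{|\Omega|}{4\pi}(\lambda)_+=\frac{|\Omega|}{4\pi}\lambda$, so
$$
F'_{\mathbb{S}^2}(\lambda)=F_{\mathbb{S}^2}(\lambda)-\frac{|\Omega|}{4\pi}\lambda,\qquad\lambda\ge0.
$$
Combining this with the explicit expression \eqref{exprflam} for $F_{\mathbb{S}^2}$ on $[\Lambda_{N-1},\Lambda_N]$ and recalling from \eqref{alpha} that $\alpha=|\Omega|/(8\pi)$, hence $|\Omega|/(4\pi)=2\alpha$, I would obtain, for $\lambda\in[\Lambda_{N-1},\Lambda_N]$,
$$
F'_{\mathbb{S}^2}(\lambda)=\alpha\bigl((\Lambda_{N-1}+\Lambda_N-2)\lambda-\Lambda_{N-1}\Lambda_N\bigr)
=\alpha\bigl((2N^2-2)\lambda-N^2(N^2-1)\bigr),
$$
using $\Lambda_{N-1}+\Lambda_N=2N^2$ and $\Lambda_{N-1}\Lambda_N=N^2(N^2-1)$ exactly as in the derivation of \eqref{exprflam}.

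It then remains to check that $\alpha\lambda^2-F'_{\mathbb{S}^2}(\lambda)\ge0$ on each interval $[\Lambda_{N-1},\Lambda_N]$. Subtracting,
$$
\mathrm{L}^{\mathrm{cl}}_{1,2}|\Omega|\lambda^2-F'_{\mathbb{S}^2}(\lambda)=\alpha\lambda^2-F'_{\mathbb{S}^2}(\lambda)
=\alpha\bigl(\lambda^2-(2N^2-2)\lambda+N^2(N^2-1)\bigr),
$$
and the quadratic polynomial in the bracket has positive leading coefficient and discriminant
$$
(2N^2-2)^2-4N^2(N^2-1)=4(N^2-1)^2-4N^2(N^2-1)=-4(N^2-1)\le0\quad\text{for all }N\ge1.
$$
Hence the bracket is nonnegative for every real $\lambda$, in particular on $[\Lambda_{N-1},\Lambda_N]$; since $\alpha>0$ and $[0,\infty)=\bigcup_{N\ge1}[\Lambda_{N-1},\Lambda_N]$, this proves \eqref{asinR2} for all $\lambda\ge0$. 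For $N=1$ the discriminant vanishes and the bracket is just $\lambda^2$, consistent with $F'_{\mathbb{S}^2}\equiv0$ on $[0,\Lambda_1]=[0,2]$; cf.\ Fig.~\ref{fig:below}.

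I do not expect any real obstacle here, but the point to be careful about is that \eqref{asinR2} cannot be deduced merely from convexity together with the values of $F'_{\mathbb{S}^2}$ at the nodes $\Lambda_N$: a piecewise linear interpolant of a convex function lies \emph{above} that function, which is the wrong direction, and verifying an upper bound by the convex function $\alpha\lambda^2$ only at the endpoints of each sub-interval does not force it on the interior. So the substance of the argument is genuinely the per-interval quadratic computation above, and it works precisely because $N\ge1$ — equivalently, because the downward shift by $2\alpha\lambda$ relative to the chord of $\alpha(\lambda^2-2\lambda)$ that defines $F'_{\mathbb{S}^2}$ is exactly enough to clear the parabola $\alpha\lambda^2$ on every piece.
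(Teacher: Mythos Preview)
Your proof is correct and essentially identical to the paper's: both use $F'_{\mathbb{S}^2}(\lambda)=F_{\mathbb{S}^2}(\lambda)-2\alpha\lambda$ together with the explicit expression \eqref{exprflam} on each interval $[\Lambda_{N-1},\Lambda_N]$, reduce the claim to $\lambda^2-2(N^2-1)\lambda+N^2(N^2-1)\ge0$, and conclude by noting the discriminant is $-4(N^2-1)\le0$. Your closing remark that convexity and nodal values alone would not suffice is a nice clarification, but the core computation is the same.
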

\begin{proof}
Recalling definition \eqref{S2flamfull} of the function
$F_{\mathbb{S}^2}(\lambda)$ and the explicit expression for it \eqref{alpha}, \eqref{exprflam},
what we have to prove is the following inequality
$$
\alpha\bigl(2N^2\lambda-N^2(N^2-1)\bigr)-2\alpha\lambda\le\alpha\lambda^2,
\qquad\text{for}\quad \lambda\in[\Lambda_{N-1},\Lambda_N]
$$
or
$
\lambda^2-2(N^2-1)\lambda+N^2(N^2-1)\ge0.
$
However, this quadratic inequality holds for all $\lambda$, since its
discriminant  for $N\ge1$ is negative:
$$
-4(N^2-1)\le0.
$$

\end{proof}

\begin{figure}[htb]
\def\svgwidth{10cm}
\def\svgheight{9cm}
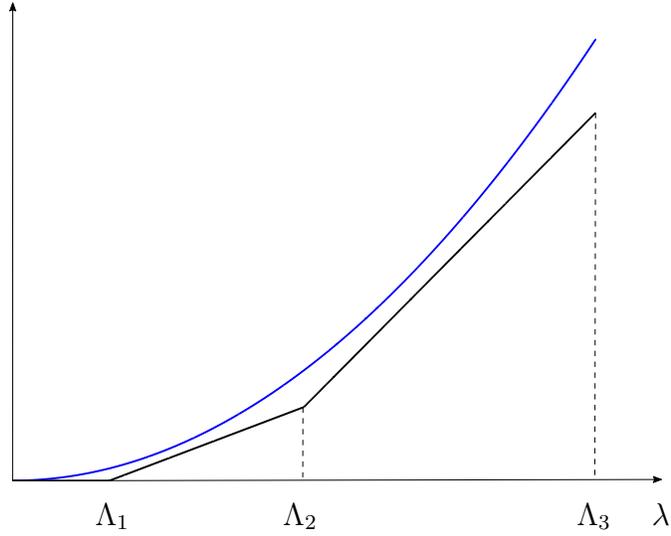
\caption{The polygonal graph of $F'_{\mathbb{S}^2}(\lambda)=F_{\mathbb{S}^2}(\lambda)-2\alpha\lambda$
lies below the parabola $\lambda\to\alpha\lambda^2$.}
\label{fig:below}
\end{figure}

Combining \eqref{D2-vec} and \eqref{asinR2} we obtain that
\begin{equation}\label{D2-vec-class}
\sum_{j=1}^\infty(\lambda-\lambda_j)_+\le
2\mathrm{L}^\mathrm{cl}_{1,2}|\Omega|\lambda^2.
\end{equation}

\begin{theorem}\label{T:DirS2-vec}
Let $\Omega\subseteq\mathbb{S}^2$  be an arbitrary domain. Then for
any $n\ge1$ the sum of the first $n$ Dirichlet eigenvalues
of the vector Laplacian
satisfies a Li--Yau-type lower bound
\begin{equation}\label{lowerDirS2-vec}
\sum_{k=1}^n\lambda_k\ge\frac{\pi}{|\Omega|}\,n^2.
\end{equation}
\end{theorem}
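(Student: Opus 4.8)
The plan is to combine the Riesz-mean bound \eqref{D2-vec-class} with the Legendre-transform duality already exploited in the proof of Theorem~\ref{T:S2Neumann}. Recall that for convex functions on $\mathbb{R}_+$ the Legendre transform reverses inequalities, so from
\[
\sum_{j=1}^\infty(\lambda-\lambda_j)_+\le 2\mathrm{L}^\mathrm{cl}_{1,2}|\Omega|\lambda^2=\frac{|\Omega|}{4\pi}\,\lambda^2
\]
we get, upon taking the Legendre transform of both sides,
\[
\left(\sum_{j=1}^\infty(\lambda-\lambda_j)_+\right)^\vee(p)\ge\left(\frac{|\Omega|}{4\pi}\,\lambda^2\right)^\vee(p).
\]

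First I would evaluate the two sides. For $c>0$ one has $(c\lambda^2)^\vee(p)=\sup_{\lambda\ge0}\bigl(p\lambda-c\lambda^2\bigr)=p^2/(4c)$, so with $c=|\Omega|/(4\pi)$ the right-hand side equals $\pi p^2/|\Omega|$. For the left-hand side I would invoke formula \eqref{Legendre}: it is a statement about the piecewise linear convex function $\lambda\mapsto\sum_j(\lambda-\lambda_j)_+$ attached to any nondecreasing sequence $\lambda_j\to+\infty$, hence it applies verbatim to the Dirichlet eigenvalues of the vector Laplacian. Setting $p=n\in\mathbb{N}$, the fractional term $(p-[p])\lambda_{[p]+1}$ in \eqref{Legendre} vanishes and one obtains $\left(\sum_j(\lambda-\lambda_j)_+\right)^\vee(n)=\sum_{k=1}^n\lambda_k$.

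Putting these together at $p=n$ yields precisely the claimed bound
\[
\sum_{k=1}^n\lambda_k\ge\frac{\pi}{|\Omega|}\,n^2 .
\]
There is no substantial obstacle here; the only points requiring a little care are the bookkeeping of the constant $2\mathrm{L}^\mathrm{cl}_{1,2}|\Omega|=|\Omega|/(4\pi)$, and the observation (made in the remark following Theorem~\ref{T:S2Neumann}) that the Legendre transform on the spectral side automatically arranges the $\lambda_j$ in nondecreasing order, so that $\left(\sum_j(\lambda-\lambda_j)_+\right)^\vee(n)$ is genuinely the sum of the $n$ smallest Dirichlet eigenvalues of $-\mathbf{\Delta}$ on $\Omega$.
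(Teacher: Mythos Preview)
Your proposal is correct and follows exactly the paper's approach: the paper's proof is a single sentence, ``Taking the Legendre transform of both sides of~\eqref{D2-vec-class} we obtain~\eqref{lowerDirS2-vec},'' and you have simply spelled out the details of that computation (the evaluation $(c\lambda^2)^\vee(p)=p^2/(4c)$ with $c=|\Omega|/(4\pi)$, and the use of \eqref{Legendre} at integer $p=n$). There is nothing to add or correct.
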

\begin{proof}
Taking the Legendre transform of both sides of~\eqref{D2-vec-class}
we obtain~\eqref{lowerDirS2-vec}.
\end{proof}

We finally consider the case of the Stokes operator in a domain
on $\mathbb{S}^2$. Let $\nu_j$ and $u_j$ be the eigenvalues and the
divergence free
vector-valued eigenfunctions of the Stokes operator
defined by the quadratic form
$$
u\to\int_{\Omega}(\rot u)^2dS,
$$
with domain
$$
u\in H^1_0(\Omega,T\mathbb{S}^2),\quad  \div u=0.
$$
If the boundary $\partial\Omega$ is sufficiently smooth, then
this eigenvalue problem can be witten in the form~\eqref{Stokessmooth}.

\begin{theorem}\label{T:StokesS2}
The sum of the first $n$  eigenvalues
of the Stokes operator on a domain $\Omega\subseteq\mathbb{S}^2$
satisfies a Li--Yau-type lower bound
\begin{equation}\label{StokesS2}
\sum_{k=1}^n\nu_k\ge\frac{2\pi}{|\Omega|}\,n^2.
\end{equation}
\end{theorem}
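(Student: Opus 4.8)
The plan is to imitate the proof of Theorem~\ref{T:Riesz-vec}, using the fact that on solenoidal vector fields the Stokes quadratic form coincides with the form of the vector Laplacian. First I would extend each Stokes eigenfunction $u_j$ by zero to all of $\mathbb{S}^2$. Since $u_j\in H^1_0(\Omega,T\mathbb{S}^2)$ and $\div u_j=0$, the zero extension $\widetilde u_j$ lies in $H^1(T\mathbb{S}^2)$, and extension by zero commutes with $\rot$ and $\div$, so that $\div\widetilde u_j=0$ on $\mathbb{S}^2$ while $\|\rot\widetilde u_j\|^2_{L_2(\mathbb{S}^2)}=\|\rot u_j\|^2_{L_2(\Omega)}=\nu_j$ (recall $\|u_j\|_{L_2(\Omega)}=1$). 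By~\eqref{byparts} this equals $(-\mathbf{\Delta}\widetilde u_j,\widetilde u_j)_{L_2(\mathbb{S}^2)}$. Being divergence free, $\widetilde u_j$ is $L_2$-orthogonal to every gradient field $v_n^k=\Lambda_n^{-1/2}\nabla Y_n^k$, so in the eigenbasis~\eqref{bas-vec} only the solenoidal functions $w_n^k$ appear:
$$
\widetilde u_j=\sum_{n=1}^\infty\sum_{k=1}^{2n+1}a_{jn}^kw_n^k,\qquad a_{jn}^k=(u_j,w_n^k)_{L_2(\Omega,T\mathbb{S}^2)},
$$
with $\sum_{n,k}(a_{jn}^k)^2=1$ and $\nu_j=\sum_{n=1}^\infty n(n+1)\sum_{k=1}^{2n+1}(a_{jn}^k)^2$.

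Next I would run the same Riesz-mean estimate as in Theorems~\ref{T:BL} and~\ref{T:Riesz-vec}: discard the positive part inside the sum over $n$, interchange the order of summation over $j$ and $n$, and bound the double sum using~\eqref{id-vec},
$$
\sum_{k=1}^{2n+1}\sum_{j=1}^\infty(a_{jn}^k)^2\le\sum_{k=1}^{2n+1}\|w_n^k\|^2_{L_2(\Omega,T\mathbb{S}^2)}=\int_\Omega\sum_{k=1}^{2n+1}|w_n^k(s)|^2\,ds=(2n+1)\frac{|\Omega|}{4\pi}.
$$
Here, in contrast with the scalar case, this step uses only Bessel's inequality for the orthonormal family $\{u_j\}$ — this family is complete in the solenoidal subspace of $L_2(\Omega,T\mathbb{S}^2)$ and not in the whole space — but the resulting inequality points the right way. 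This yields
$$
\sum_{j=1}^\infty(\lambda-\nu_j)_+\le\frac{|\Omega|}{4\pi}\sum_{n=1}^\infty\bigl(\lambda-n(n+1)\bigr)_+(2n+1)=F'_{\mathbb{S}^2}(\lambda),
$$
and by Lemma~\ref{L:asinR2},
$$
\sum_{j=1}^\infty(\lambda-\nu_j)_+\le\mathrm{L}^\mathrm{cl}_{1,2}|\Omega|\lambda^2=\alpha\lambda^2,\qquad\alpha=\frac{|\Omega|}{8\pi}.
$$
Note that, unlike~\eqref{D2-vec-class}, there is no factor $2$: the Stokes eigenfunctions only see the divergence free half of the basis.

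Finally I would take the Legendre transform of both sides, exactly as in the duality argument behind Theorems~\ref{T:S2Neumann} and~\ref{T:DirS2-vec}. The transform of $\alpha\lambda^2$ is $p^2/(4\alpha)$, while by~\eqref{Legendre} the transform of the left-hand side is $(p-[p])\nu_{[p]+1}+\sum_{k=1}^{[p]}\nu_k$. Since the Legendre transform reverses the inequality, putting $p=n\in\mathbb{N}$ gives
$$
\sum_{k=1}^n\nu_k\ge\frac{n^2}{4\alpha}=\frac{2\pi}{|\Omega|}\,n^2,
$$
which is~\eqref{StokesS2}.

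The only step that calls for a little care is the very first one: confirming that the zero extension of a solenoidal field in $H^1_0(\Omega,T\mathbb{S}^2)$ is again solenoidal on the closed manifold $\mathbb{S}^2$, and that~\eqref{byparts} applies to it. This follows in the standard way by approximating $u_j$ in $H^1$ by fields in $C_0^\infty(\Omega,T\mathbb{S}^2)$, for which the commutation of zero extension with $\div$ and $\rot$ is evident, and passing to the limit. Everything else is a verbatim repetition of the computations in Theorems~\ref{T:BL} and~\ref{T:Riesz-vec} and the Legendre-transform step of Theorem~\ref{T:DirS2-vec}.
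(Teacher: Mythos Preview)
Your proposal is correct and follows essentially the same route as the paper: expand the zero-extended Stokes eigenfunctions in the solenoidal family $\{w_n^k\}$ only, obtain the Riesz-mean bound $\sum_j(\lambda-\nu_j)_+\le\frac{|\Omega|}{4\pi}\sum_{n\ge1}(\lambda-n(n+1))_+(2n+1)$, apply Lemma~\ref{L:asinR2}, and take the Legendre transform. Your explicit remarks that the zero extension remains solenoidal on $\mathbb{S}^2$ and that only Bessel's inequality (not Parseval) is available---and that it points the right way---are useful clarifications the paper leaves implicit.
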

\begin{proof}
Arguing as in the proof of Theorem~\ref{T:Riesz-vec} but this time
using only the family of divergence free vector-valued eigenfunctions $w_n^k$
 (see \eqref{bas-vec}, \eqref{id-vec})
we obtain
$$
\sum_{j=1}^\infty(\lambda-\nu_j)_+\le
\frac{|\Omega|}{4\pi}\sum_{n=1}^\infty\bigl(\lambda-n(n+1)\bigr)_+(2n+1).
$$
It remains to use~\eqref{asinR2} and apply the Legendre transform.
\end{proof}

\setcounter{equation}{0}
\section{  Neumann problem on domains on  higher dimensional spheres}\label{sec4}

\begin{proposition}\label{P:Sd-1}
Let $d\ge4$. For $\lambda>0$ the function $F_{\mathbb{S}^{d-1}}(\lambda)$
defined in~\eqref{FSd} satisfies the inequality
\begin{equation}\label{Sd-1andRd-1}
F_{\mathbb{S}^{d-1}}(\lambda)>\mathrm{L}^\mathrm{cl}_{1,d-1}|\Omega|\lambda^{1+(d-1)/2}.
\end{equation}
\end{proposition}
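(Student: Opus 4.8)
The plan is to turn~\eqref{Sd-1andRd-1} into a family of elementary numerical inequalities indexed by the degree $N$, and then to check these from the closed form for $f(\Lambda_N)$ obtained in Section~\ref{secApp}. The hypothesis $d\ge4$ will enter only at the last step, making the numerical inequalities strict; for $d=3$ they degenerate to the equalities~\eqref{S2flam}.

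With the notation of~\eqref{FSd}--\eqref{flam}, write $F_{\mathbb{S}^{d-1}}(\lambda)=\frac{|\Omega|}{\sigma_d}f(\lambda)$ and put
\[
c_d:=\sigma_d\,\mathrm{L}^\mathrm{cl}_{1,d-1},\qquad p:=\tfrac{d+1}{2}\ (>1).
\]
Then $\mathrm{L}^\mathrm{cl}_{1,d-1}|\Omega|\lambda^{1+(d-1)/2}=\frac{|\Omega|}{\sigma_d}c_d\lambda^{p}$, so~\eqref{Sd-1andRd-1} is equivalent to $f(\lambda)>c_d\lambda^{p}$ for all $\lambda>0$. Here $\lambda\mapsto c_d\lambda^{p}$ is strictly convex on $(0,\infty)$ and vanishes at $0$, while by Lemma~\ref{L:piecewise} the function $f$ is convex, vanishes at $\lambda=0$, and on each $[\Lambda_{N-1},\Lambda_N]$ coincides with its own secant through $(\Lambda_{N-1},f(\Lambda_{N-1}))$ and $(\Lambda_N,f(\Lambda_N))$. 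On the same interval the secant of the convex function $c_d\lambda^{p}$ lies above that function, strictly in the interior. Hence, once we establish the node inequalities
\[
f(\Lambda_N)>c_d\,\Lambda_N^{p},\qquad N=1,2,\dots,
\]
(call them $(\star)$; recall also $f(\Lambda_0)=f(0)=0$), the secant of $f$ dominates the secant of $c_d\lambda^{p}$ on every $[\Lambda_{N-1},\Lambda_N]$, and therefore $f(\lambda)>c_d\lambda^{p}$ at every $\lambda>0$ — strictly at the nodes by $(\star)$, strictly in each open interval by strict convexity of $c_d\lambda^{p}$. So the whole statement reduces to $(\star)$.

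To prove $(\star)$ I would substitute the explicit formula~\eqref{genform} for $f(\Lambda_N)$: it expresses $f(\Lambda_N)$ as an explicit combination of binomial coefficients in $N$ and $d$, that is, a polynomial in $N$ of degree $d+1$ whose leading coefficient is exactly $c_d$ (this matching of top coefficients is the Weyl law, and is what makes Corollary~\ref{Cor:geeq} an equality at the nodes when $d=3$). Squaring $(\star)$, which is harmless as both sides are positive, converts it into
\[
f(\Lambda_N)^{2}>c_d^{\,2}\,\bigl(N(N+d-2)\bigr)^{d+1},
\]
a comparison of two polynomials in $N$ of equal degree $2(d+1)$ and equal leading coefficient. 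For $d=3$ the two sides are identically equal by~\eqref{S2flam}; for $d\ge4$ one checks that their difference is strictly positive for $N\ge1$. I would organize this as an induction on $N$: the base case $N=1$ comes from the direct evaluation $f(\Lambda_1)=k_d(0)(\Lambda_1-\Lambda_0)=\Lambda_1=d-1$ together with the elementary estimate $c_d(d-1)^{p}<d-1$, and the inductive step compares the first and second forward differences in $N$ of $f(\Lambda_N)$ — which by~\eqref{flamN} are controlled by the partial sums $\sum_{n=0}^{N-1}k_d(n)$ and by the multiplicities~\eqref{mult} — with those of $c_d(N(N+d-2))^{p}$.

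I expect this last polynomial comparison to be the main obstacle. Crude estimates such as $\Lambda_N=N(N+d-2)\le\bigl(N+\tfrac{d-2}{2}\bigr)^{2}$ lose too much: already for $d=3$, where $(\star)$ is an \emph{equality}, such a bound would produce a false inequality, so one has to keep $\Lambda_N=N(N+d-2)$ intact and track the lower-order terms of the two competing polynomials with care, reading off the strict gap for $d\ge4$ from a comparison with the $d=3$ equality. (Equivalently, writing $f(\lambda)=\int_0^\lambda\mathcal N(\mu)\,d\mu$ with $\mathcal N$ the eigenvalue counting function of the full sphere and $W$ the smooth profile determined by $\int_0^\lambda W=c_d\lambda^{p}$, inequality $(\star)$ asserts $\int_0^{\Lambda_N}(\mathcal N-W)\,d\mu>0$; the integrated form is unavoidable, since $\mathcal N-W$ already changes sign on every spectral gap of $\mathbb{S}^2$.)
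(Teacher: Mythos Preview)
Your reduction to the node inequalities $(\star)$ is correct and is exactly what the paper does: by convexity of $c_d\lambda^{p}$ and the piecewise-linear shape of $f$, it suffices to check $f(\Lambda_N)>c_d\Lambda_N^{p}$ for $N\ge1$.

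The gap is that you do not actually prove $(\star)$. You propose squaring and then an induction on $N$ via forward differences, but you stop short of carrying this out, and you yourself flag it as ``the main obstacle''. The squaring step is in fact a detour, and the induction you sketch would be delicate for exactly the reason you mention: the $d=3$ case is an identity, so any estimate that is not tight will fail. The paper bypasses all of this with a direct factorization. From~\eqref{genform} and the identification of the leading coefficient as $\mathrm{L}^\mathrm{cl}_{1,d-1}$, the node inequality is equivalent to
\[
(N+\tfrac{d-1}{2})(N+\tfrac{d-3}{2})(N+d-2)(N+d-3)\cdots(N+1)N>\bigl[N(N+d-2)\bigr]^{(d+1)/2}.
\]
Both sides now have $d+1$ linear factors (counting the half-power on the right). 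Set $\lambda=N(N+d-2)$ and pair the left-hand factors symmetrically about $N+\tfrac{d-2}{2}$:
\[
(N+j)(N+d-2-j)=\lambda+j(d-2-j)=:\lambda+\alpha_j,\qquad j=0,1,\dots,
\]
with $\alpha_0=0$ and $\alpha_j>0$ for $1\le j\le\tfrac{d-2}{2}$ once $d\ge4$. The two half-integer factors pair the same way, and if $d$ is odd the unpaired middle factor is $N+\tfrac{d-2}{2}=\sqrt{\lambda+(d-2)^2/4}>\sqrt{\lambda}$. Hence the left-hand side is $\lambda\prod_j(\lambda+\alpha_j)$ (times $\sqrt{\lambda+(d-2)^2/4}$ in the odd case), which strictly exceeds $\lambda^{(d+1)/2}$. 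For $d=3$ every $\alpha_j$ vanishes and one recovers the equality~\eqref{S2flam}; for $d\ge4$ at least one $\alpha_j$ is positive and $(\star)$ is strict. This two-line pairing trick replaces your proposed induction entirely.
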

\begin{proof}
The proof essentially relies on  the
following general formula established in the Appendix:
\begin{equation}\label{flamd}
\aligned
f(\Lambda_N):=\sum_{n=0}^{N-1}k_d(n)(\Lambda_N-\Lambda_n)
=\frac{(2N+d-1)(2N+d-3)}{d+1}\binom{N+d-2}{d-1}.
\endaligned
\end{equation}
In view of the convexity of both functions in~\eqref{Sd-1andRd-1}
and the polygonal shape of the graph of $F_{\mathbb{S}^{d-1}}(\lambda)$
it suffices to show that \eqref{Sd-1andRd-1} holds for
$\lambda=\Lambda_N=N(N+d-2)$ for all $N=1,2,\dots\,$. In other
words, we have to show that
$$
\frac{(2N+d-1)(2N+d-3)}{\sigma_{d}(d+1)}\binom{N+d-2}{d-1}>
\mathrm{L}^\mathrm{cl}_{1,d-1}\bigl(N(N+d-2)\bigr)^{(d+1)/2}.
$$
The coefficient of the leading term  $N^{d+1}$ in the left-hand side
is
$$
\frac4{\sigma_d(d+1)\Gamma(d)}=\mathrm{L}^\mathrm{cl}_{1,d-1},
$$
where the last  equality follows from~\eqref{class} and the
duplication formula for the gamma function.
Hence  what we have to prove is the
following inequality:
\begin{multline}\label{ineqLambdaN}
(N+(d-1)/2)(N+(d-3)/2)(N+d-2)(N+d-3)\cdots(N+1)N>\\>[N(N+d-2)]^{(d+1)/2}.
\end{multline}

As we have seen, in the two-dimensional case $d=3$ we have equality here,
but we are now dealing with the case $d\ge4$.

The left-hand side contains $d+1$ factors.
We set
$$
N(N+d-2)=\lambda,\qquad N=\sqrt{\lambda+(d-2)^2/4}-(d-2)/2.
$$
Assume first that $d+1$ is even. We combine the factors
in~\eqref{ineqLambdaN} as follows (the first two terms are repeated
twice: first as they  are and the second time in the middle of the
next group of factors)
$$
\aligned
&N(N+d-2)=\lambda,\\
&(N+1)(N+d-3)=\lambda+d-3=:\lambda+\alpha_1,\\
&(N+2)(N+d-4)=\lambda+2(d-4)=:\lambda+\alpha_2,\\
&\vdots\\
&(N+(d-3)/2)(N+(d-1)/2)=\lambda+(d-1)(d-3)/4=:\lambda+\alpha_{(d-1)/2},\\
&(N+(d-1)/2)(N+(d-3)/2)=\lambda+(d-1)(d-3)/4=:\lambda+\alpha_{(d-1)/2},
\endaligned
$$
where all $\alpha_j>0$.
Now the inequality we are looking for becomes obvious
$$
\lambda(\lambda+\alpha_{(d-1)/2})\prod_{j=1}^{(d-3)/2}(\lambda+\alpha_{j})>\lambda^{(d+1)/2}.
$$

Finally, if $d+1$ is odd, we act similarly, and the factor without pair is
$N+(d-2)/2$. But $N+(d-2)/2=\sqrt{\lambda+(d-2)^2/4}$ and we obtain instead
$$
\lambda\sqrt{\lambda+(d-2)^2/4}\prod_{j=1}^{(d-2)/2}(\lambda+\alpha_{j})>\lambda^{(d+1)/2}.
$$
\end{proof}

\begin{theorem}\label{Corr:SdandRd}
For any domain $\Omega\subseteq\mathbb{S}^{d-1}$
for which the embedding $H^1(\Omega)\hookrightarrow L_2(\Omega)$ is compact
the following lower bound holds for the Riesz means of order $1$ of
the Neumann eigenvalues
\begin{equation}\label{asinRd}
\sum_{j=1}^\infty(\lambda-\mu_j)_+\ge
\mathrm{L}^\mathrm{cl}_{1,d-1}|\Omega|\lambda^{1+(d-1)/2}.
\end{equation}
\end{theorem}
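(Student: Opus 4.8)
The plan is to obtain \eqref{asinRd} as the immediate combination of two facts already in hand. First, specialize Theorem~\ref{T:BL} to $\sigma=1$ and to the Neumann eigenvalues: the second inequality in \eqref{Riesz} is precisely
\[
\sum_{j=1}^\infty(\lambda-\mu_j)_+\ \ge\ \frac{|\Omega|}{\sigma_d}\sum_{n=0}^\infty(\lambda-\Lambda_n)_+\,k_d(n)\ =\ F_{\mathbb{S}^{d-1}}(\lambda)
\]
with $F_{\mathbb{S}^{d-1}}$ as defined in \eqref{FSd}. Hence it suffices to prove the pointwise lower bound $F_{\mathbb{S}^{d-1}}(\lambda)\ge\mathrm{L}^{\mathrm{cl}}_{1,d-1}|\Omega|\lambda^{1+(d-1)/2}$ for all $\lambda\ge0$.

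For $d\ge4$ this bound is exactly the content of Proposition~\ref{P:Sd-1} (in fact strict for $\lambda>0$). For $d=3$, i.e.\ on $\mathbb{S}^2$, Proposition~\ref{P:Sd-1} does not apply, but the required inequality $F_{\mathbb{S}^2}(\lambda)\ge\mathrm{L}^{\mathrm{cl}}_{1,2}|\Omega|\lambda^2$ is furnished by Corollary~\ref{Cor:geeq} (inequality \eqref{S2flamfull}), now with equality at $\lambda=\Lambda_N$. Putting the two cases together, for every sphere $\mathbb{S}^{d-1}$ with $d\ge3$ and every $\lambda\ge0$ we get $\sum_j(\lambda-\mu_j)_+\ge F_{\mathbb{S}^{d-1}}(\lambda)\ge\mathrm{L}^{\mathrm{cl}}_{1,d-1}|\Omega|\lambda^{1+(d-1)/2}$, which is \eqref{asinRd}.

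There is essentially no obstacle left in this last step; the analytic work all sits in Proposition~\ref{P:Sd-1}, whose proof rests on the closed-form evaluation \eqref{flamd} of $f(\Lambda_N)$ from the Appendix, on the reduction --- via continuity and piecewise linearity of $F_{\mathbb{S}^{d-1}}$ (Lemma~\ref{L:piecewise}) against the convexity of $\lambda\mapsto\mathrm{L}^{\mathrm{cl}}_{1,d-1}|\Omega|\lambda^{1+(d-1)/2}$ --- to checking the bound only at the break points $\lambda=\Lambda_N$, and on the elementary factor-pairing that turns \eqref{ineqLambdaN} into the obvious $\prod_j(\lambda+\alpha_j)\ge\lambda^{(d+1)/2}$ with $\alpha_j>0$, together with the duplication-formula identity $4/(\sigma_d(d+1)\Gamma(d))=\mathrm{L}^{\mathrm{cl}}_{1,d-1}$ that matches leading coefficients. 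The only point worth flagging is that the circle $\mathbb{S}^1$ (the case $d=2$, already singled out as special in the introduction) is not covered by this argument.
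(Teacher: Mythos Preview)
Your proof is correct and follows exactly the paper's approach: the paper's own proof is the one-liner ``a combination of~\eqref{Riesz}, \eqref{FSd} and~\eqref{Sd-1andRd-1}'', which is precisely what you spell out. You are in fact slightly more careful than the paper in explicitly separating the $d=3$ case (handled by Corollary~\ref{Cor:geeq}) from the $d\ge4$ case (Proposition~\ref{P:Sd-1}), and in flagging that $\mathbb{S}^1$ is not covered.
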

\begin{proof} The proof is a combination of~\eqref{Riesz}, \eqref{FSd} and~\eqref{Sd-1andRd-1}.
\end{proof}

\begin{theorem}\label{T:S3Neumann}
The sum of the first $n$ eigenvalues $\mu_k$
of the Neumann Laplacian on a domain $\Omega\subseteq\mathbb{S}^{d-1}$,
$d-1\ge2$,  satisfies the estimate
\begin{equation}\label{LiYauKroger3}
\sum_{k=1}^n\mu_k\le
\frac {d-1}{d+1}\left(\frac{(2\pi)^{d-1}}{\omega_{d-1}|\Omega|}\right)^{2/(d-1)}\,n^{1+2/(d-1)},
\end{equation}
while each eigenvalue satisfies for $k=0,1,\dots$  the upper bound
\begin{equation}\label{Krog_Lap}
\mu_{k+1}\le\left(\frac{d+1}{2}\right)^{2/(d-1)}
\left(\frac{(2\pi)^{d-1}}{\omega_{d-1}|\Omega|}\right)^{2/(d-1)}\,k^{2/(d-1)}.
\end{equation}
\end{theorem}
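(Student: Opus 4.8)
The plan is to derive both estimates from the Riesz--mean bound \eqref{asinRd} of Theorem~\ref{Corr:SdandRd} by the Legendre--transform duality already used for Theorem~\ref{T:S2Neumann}, following \cite{Kroger, L-W}. Set $g(\lambda):=\sum_{j=1}^\infty(\lambda-\mu_j)_+$ and $h(\lambda):=\mathrm{L}^\mathrm{cl}_{1,d-1}|\Omega|\,\lambda^{q}$ with $q:=(d+1)/2>1$. Both are nonnegative convex functions on $\mathbb{R}_+$ vanishing at the origin, and \eqref{asinRd} says $g\ge h$, hence $g^\vee\le h^\vee$. By \eqref{Legendre} the transform $g^\vee$ is piecewise linear with $g^\vee(n)=\sum_{k=1}^n\mu_k$ at every integer $n$. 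A direct maximization of $p\lambda-\mathrm{L}^\mathrm{cl}_{1,d-1}|\Omega|\lambda^q$, attained at $\lambda=\bigl(p/(q\,\mathrm{L}^\mathrm{cl}_{1,d-1}|\Omega|)\bigr)^{1/(q-1)}$, gives
$$
h^\vee(p)=\frac{q-1}{q}\bigl(q\,\mathrm{L}^\mathrm{cl}_{1,d-1}|\Omega|\bigr)^{-1/(q-1)}p^{q/(q-1)}
=\frac{d-1}{d+1}\Bigl(\tfrac{2}{(d+1)\,\mathrm{L}^\mathrm{cl}_{1,d-1}|\Omega|}\Bigr)^{2/(d-1)}p^{1+2/(d-1)}.
$$
Using \eqref{class} and $\int_{\mathbb{R}^{d-1}}(1-|\xi|^2)_+\,d\xi=\tfrac{2}{d+1}\omega_{d-1}$ one identifies $\mathrm{L}^\mathrm{cl}_{1,d-1}=\tfrac{2\omega_{d-1}}{(d+1)(2\pi)^{d-1}}$, whence $\tfrac{2}{(d+1)\,\mathrm{L}^\mathrm{cl}_{1,d-1}|\Omega|}=\tfrac{(2\pi)^{d-1}}{\omega_{d-1}|\Omega|}$; evaluating $g^\vee(n)\le h^\vee(n)$ then gives \eqref{LiYauKroger3}. (For $d=3$ this is \eqref{LiYauKroger}, and one then invokes \eqref{asinRd} through Corollary~\ref{Cor:geeq} rather than Proposition~\ref{P:Sd-1}.)

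For the pointwise bound \eqref{Krog_Lap} I would use the monotonicity $\mu_{k+1}\le\mu_{k+2}\le\cdots$ together with the explicit shape of $g^\vee$ coming from \eqref{Legendre}: for every integer $k\ge0$ and every real $m>0$ all slopes of $g^\vee$ to the right of $k$ are at least $\mu_{k+1}$, so that $g^\vee(k+m)-g^\vee(k)\ge m\,\mu_{k+1}$. Since $g^\vee(k)\ge0$ and $g^\vee(k+m)\le h^\vee(k+m)$, this yields
$$
\mu_{k+1}\le\frac{h^\vee(k+m)}{m}=\frac{d-1}{d+1}\Bigl(\frac{(2\pi)^{d-1}}{\omega_{d-1}|\Omega|}\Bigr)^{2/(d-1)}\frac{(k+m)^{(d+1)/(d-1)}}{m}\qquad\text{for all }m>0.
$$
Minimizing the last expression over $m>0$ gives the optimal window width $m=(d-1)k/2$, hence $k+m=(d+1)k/2$, and a short simplification turns the resulting inequality into precisely \eqref{Krog_Lap}.

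I do not anticipate a genuine obstacle: the whole argument is the Legendre--duality machinery applied to the already-proved inequality \eqref{asinRd}. The two points demanding a little care are the bookkeeping in the transform of the power function — in particular checking that the constant collapses to $\tfrac{d-1}{d+1}\bigl((2\pi)^{d-1}/(\omega_{d-1}|\Omega|)\bigr)^{2/(d-1)}$ via \eqref{class} and the duplication formula for $\Gamma$ — and, in \eqref{Krog_Lap}, the fact that one optimizes over a continuous width $m$ rather than an integer; this is legitimate because \eqref{Legendre} describes $g^\vee$ at all real arguments, so the estimate $g^\vee(k+m)-g^\vee(k)\ge m\mu_{k+1}$ holds for non-integer $m$ as well.
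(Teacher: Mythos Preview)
Your derivation of \eqref{LiYauKroger3} is exactly the paper's: apply the Legendre transform to both sides of \eqref{asinRd}, use \eqref{Legendre} on the left, and identify the constant via \eqref{class}. Nothing differs there.

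For \eqref{Krog_Lap} your argument is correct but genuinely different from the paper's. You stay inside the Legendre picture, exploit that the slopes of $g^\vee$ on $[k,k+1],[k+1,k+2],\dots$ are $\mu_{k+1},\mu_{k+2},\dots$, bound $\mu_{k+1}\le h^\vee(k+m)/m$, and minimize over $m$; the optimum $m=(d-1)k/2$ indeed collapses the constants to the stated form. The paper instead bypasses the Legendre transform for this part and uses the counting function directly: from $\mu_1=0$ one has the trivial inequality
\[
N(\lambda,-\Delta_\Omega^N)\ \ge\ \frac{1}{\lambda}\sum_{j=1}^\infty(\lambda-\mu_j)_+\ \ge\ \mathrm{L}^\mathrm{cl}_{1,d-1}|\Omega|\,\lambda^{(d-1)/2},
\]
and inverting $k\ge N(\mu_{k+1})\ge \mathrm{L}^\mathrm{cl}_{1,d-1}|\Omega|\,\mu_{k+1}^{(d-1)/2}$ gives \eqref{Krog_Lap} in one line. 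The counting-function route is shorter and avoids the optimization over the auxiliary parameter $m$; your route has the virtue of using only the single duality tool already in play, and it would still work in situations where the smallest eigenvalue is not zero (so that the inequality $N(\lambda)\ge\lambda^{-1}\sum(\lambda-\mu_j)_+$ might be significantly lossy). Either way the result and the constant are the same.
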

\begin{proof}
Taking the Legendre transform of~\eqref{asinRd}  we obtain~\eqref{LiYauKroger3}.
Next, we prove \eqref{Krog_Lap} following \cite[Theorem~3.2]{LapFA}: since $\mu_1=0$,
the counting function $N(\lambda,-\Delta_\Omega^N):=\#\{k,\mu_k<\lambda\}$
satisfies
$$
N(\lambda,-\Delta_\Omega^N)\ge\frac1\lambda
\sum_{j=1}^\infty(\lambda-\mu_j)_+.
$$
Therefore, in view of \eqref{asinRd},
$$
N(\lambda,-\Delta_\Omega^N)\ge
\mathrm{L}^\mathrm{cl}_{1,d-1}|\Omega|\lambda^{(d-1)/2}=
\frac2{d+1}\mathrm{L}^\mathrm{cl}_{0,d-1}|\Omega|\lambda^{(d-1)/2},
$$
which is equivalent to \eqref{Krog_Lap}.
\end{proof}
\begin{remark}
{\rm
Upper bounds \eqref{LiYauKroger3} and \eqref{Krog_Lap} are exactly
the same as in the case $\Omega\subset\mathbb{R}^{d-1}$, see~\cite{Kroger,LapFA}.
 }
\end{remark}

\setcounter{equation}{0}
\section{Appendix. Calculation of $f(\Lambda_N)$}\label{secApp}

We first recall the formula for the summation of the
multiplicities~\eqref{mult}
\begin{equation}\label{summult}
\sum_{n=0}^Nk_d(n)=k_{d+1}(N).
\end{equation}
In fact, since
$$
k_d(n)=\binom{d+n-1}{n}-\binom{d+n-3}{n-2},
$$
(where the right hand side is the difference between the dimensions
of the homogeneous polynomials of degrees $n$ and $n-2$ in
$\mathbb{R}^d$) the sum telescopes and we obtain~\eqref{summult}.

We now consider
\begin{equation}\label{flamN1}
f(\Lambda_N)=\sum_{n=0}^{N-1}k_d(n)(\Lambda_N-\Lambda_n)
=\Sigma_1(d,N)-\Sigma_2(d,N),
\end{equation}
where
$$
\Sigma_1(d,N)=\Lambda_N\sum_{n=0}^{N-1}k_d(n),\qquad
\Sigma_2(d,N)=\sum_{n=0}^{N-1}k_d(n)\Lambda_n.
$$

For the first sum we use \eqref{summult} and find that
\begin{equation}\label{first}
\Sigma_1(d,N)=N(N+d-2)k_{d+1}(N-1)=
N(2N+d-3)\binom{N+d-2}{d-1}.
\end{equation}

Using \eqref{mult} we write the second sum as follows
$$
\aligned
\Sigma_2(d,N)=\frac1{d-2}\sum_{n=0}^{N-1}\binom{n+d-3}{d-3}
(2n+d-2)n(n+d-2)=\\=
\frac1{d-2}\sum_{n=0}^{N-1}\binom{n+d-3}{d-3}
\left(2n^3+3(d-2)n^2+(d-2)^2n\right).
\endaligned
$$
The last factor is a polynomial with respect to $n$ of degree three
without a constant term, and we represent it in the basis
$$
\left\{\frac n{d-2},\ \frac{n(n-1)}{(d-2)(d-1)},\ \frac{n(n-1)(n-2)}{(d-2)(d-1)d}\right\}
$$
as follows
$$
\aligned
&2n^3+3(d-2)n^2+(d-2)^2n=\\=&(d-2)(d-1)d
\left(2\frac{n(n-1)(n-2)}{(d-2)(d-1)d}+
3\frac{n(n-1)}{(d-2)(d-1)}+
\frac{n}{(d-2)}\right).
\endaligned
$$
Therefore
$$
\aligned
\Sigma_2(d,N)=(d-1)d\sum_{n=0}^{N-1}\biggl[2\binom{n+d-3}{d-3}\frac{n(n-1)(n-2)}{(d-2)(d-1)d}+\\
+3\binom{n+d-3}{d-3}\frac{n(n-1)}{(d-2)(d-1)}+
\binom{n+d-3}{d-3}\frac{n}{d-2}\biggr]=\\=
(d-1)d\sum_{n=0}^{N-1}\biggl[2\binom{n+d-3}{d}+
3\binom{n+d-3}{d-1}+\binom{n+d-3}{d-2}\biggr].
\endaligned
$$
By the well-known property that
$\binom{m}{k}=\binom{m+1}{k+1}-\binom{m}{k+1}$ the three sums
telescope and we obtain
$$
\Sigma_2(d,N)=(d-1)d\biggl[2\binom{N+d-3}{d+1}+
3\binom{N+d-3}{d}+\binom{N+d-3}{d-1}\biggr].
$$

In view of \eqref{first} we single out the factor
$\binom{N+d-2}{d-1}$ here and after straight forward calculations
we obtain
$$
\Sigma_2(d,N)=\binom{N+d-2}{d-1}\frac{(d-1)(N-1)}{N+d-2}
\biggl[\frac{2(N-2)(N-3)}{d+1}+3(N-2)+d\biggr].
$$
However, the last factor equals
$$
\frac{(N+d-2)(2N+d-3)}{d+1},
$$
which  gives
\begin{equation}\label{second}
\Sigma_2(d,N)=\frac{d-1}{d+1}\binom{N+d-2}{d-1}(N-1)(2N+d-3).
\end{equation}
We finally obtain
\begin{equation}\label{genform}
f(\Lambda_N)
=\Sigma_1(d,N)-\Sigma_2(d,N)=
\frac{(2N+d-1)(2N+d-3)}{d+1}\binom{N+d-2}{d-1},
\end{equation}
which is \eqref{flamd}.

\subsection*{Acknowledgements}\label{SS:Acknow}
A.I.  acknowledges the warm hospitality of the Mit\-tag--Leffler
Institute where this work has started. We also thank D.N.\,Tulya\-kov
for his help in the proof of the  summation formula in the
Appendix. The work of A.I. was supported by the Russian Science
Foundation (grant no. 14-21-00025). A.L. was partially funded by the grant of the Russian Federation Government to support research under the supervision of a leading scientist at the Siberian Federal University, 14.Y26.31.0006.

\end{document}